\numberwithin{equation}{section}
\newtheorem{prop}{Proposition}[section]
\newtheorem{lemm}[prop]{Lemma}
\def\and{\quad{\rm and}\quad}
\def\<{\langle}
\def\>{\rangle}
\newtheorem{theorem}{Theorem}
\newtheorem{question}{Question}
\newtheorem{remark}{Remark}
\newtheorem*{theoremA}{Theorem A}
\newtheorem*{theoremB}{Theorem B}
\newtheorem*{theoremC}{Theorem C}
\title[Monge-Amp\`ere equation with bounded periodic data]{Monge-Amp\`ere equation with bounded periodic data}
\author{ Yanyan Li
        \and
        Siyuan Lu 
        }       
\address{Department of Mathematics, Rutgers University, 110 Frelinghuysen Road, Piscataway, NJ 08854}
\email{yyli@math.rutgers.edu}
\email{siyuan.lu@math.rutgers.edu}
\thanks{Research of the first named author is partially supported by NSF grant DMS-1501004.}
\begin{document}

\begin{abstract}
We consider the Monge-Amp\`ere equation $\det(D^2u)=f$ in $\mathbb{R}^n$, where $f$ is a positive bounded periodic function. We prove that $u$ must be the sum of a quadratic polynomial and a periodic function. For $f\equiv 1$, this is the classic result by J\"orgens, Calabi and Pogorelov. For $f\in C^\alpha$, this was proved by Caffarelli and the first named author.
\end{abstract}

\subjclass{53C20,  53C21, 58J05, 35J60}

\maketitle

\section{Introduction}

A classic theorem of J\"orgens \cite{J}, Calabi \cite{Calabi} and Pogorelov \cite{P} states that any classical convex solution of
\begin{align*}
\det(D^2u)=1\quad in \quad \mathbb{R}^n
\end{align*}
must be a quadratic polynomial.

A simpler and more analytical proof, along the lines of affine geometry, was later given by Cheng and Yau \cite{CY}. The theorem was extended by Caffarelli \cite{C} to viscosity solutions. Another proof of the theorem was given by Jost and Xin \cite{JX}. Trudinger and Wang \cite{TW} proved that if $\Omega$ is an open convex subset of $\mathbb{R}^n$ and $u$ is a convex $C^2$ solution of $\det(D^2u)=1$ in $\Omega$ with $\lim_{x\rightarrow \partial \Omega}u(x)=\infty$, then $\Omega=\mathbb{R}^n$.   Ferrer, Mart\'inez and Mil\'an \cite{FMM1,FMM2} extended the above Liouville type theorem in dimension two. 
Caffarelli and the first named author \cite{CL, CL2} made two extensions, and one of them includes periodic data.

More specificly,  assume for some $a_1,\cdots,a_n>0$, $f$ satisfies
\begin{align}\label{periodic}
f(x+a_ie_i)=f(x),\quad  \forall x\in \mathbb{R}^n,\quad 1\leq i\leq n,
\end{align}
where $e_1=(1,0,\cdots,0),\cdots, e_n=(0,\cdots,0,1)$.

Consider the Monge-Amp\`ere equation
\begin{align}\label{eqn}
\det(D^2u)=f,\quad in \quad \mathbb{R}^n.
\end{align}
\begin{theoremA}\ (\cite{CL2})\ 
Let $f\in C^\alpha(\mathbb{R}^n)$, $0<\alpha<1$ with $f>0$ satisfy (\ref{periodic}), and let $u\in C^2(\mathbb{R}^n)$ be a convex solution of (\ref{eqn}). Then there exist $b\in \mathbb{R}^n$ and a symmetric positive definite $n\times n$ matrix $A$ with $\det A=\strokedint_{\prod_{1\leq i\leq n}[0,a_i]}f$, such that $v:=u-\frac{1}{2}x^T Ax-b\cdot x$ is $a_i$-periodic in $i$-th variable, i.e.
\begin{align*}
v(x+a_ie_i)=v(x), \quad \forall x\in \mathbb{R}^n,\quad 1\leq i\leq n.
\end{align*}
\end{theoremA}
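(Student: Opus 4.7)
My plan proceeds in three phases: (i) obtain a uniform $L^\infty$ bound on $D^2 u$ on all of $\mathbb{R}^n$; (ii) identify an asymptotic quadratic polynomial via a blow-down; (iii) use a Liouville-type argument on the linearized Monge-Amp\`ere operator to conclude periodicity of the remainder.

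\textbf{Phase (i): Global Hessian bound.} After subtracting an affine function, assume $u(0) = 0$, $Du(0) = 0$, $u \geq 0$. Consider the sections $S_h := \{u < h\}$ and their John ellipsoids, with affine maps $T_h$ normalizing $B_1 \subset T_h(S_h) \subset B_n$. Since $f \in C^\alpha$ is bounded between positive constants, Caffarelli's strict convexity and interior $C^{2,\alpha}$ theory give uniform regularity of the section-normalized functions provided the eccentricity $\|T_h\|\,\|T_h^{-1}\|$ stays bounded in $h$. The periodicity of $f$ forces such a bound: a blow-up of eccentricity along $h_k \to \infty$ would produce, after translating by suitable lattice multiples (along which $f$ is invariant), incompatible asymptotic solutions to $\det D^2 u = f$. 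Bounded eccentricity at all scales combined with Pogorelov's interior estimate then yields $\|D^2 u\|_{L^\infty(\mathbb{R}^n)} < \infty$ and $u \in C^{2,\alpha}_{\mathrm{loc}}$ with scale-independent bounds.

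\textbf{Phase (ii): Blow-down.} Set $u_R(x) := R^{-2} u(Rx)$, so $\det D^2 u_R(x) = f(Rx)$, and by periodicity $f(R\,\cdot) \rightharpoonup \bar f := \strokedint_{\prod[0,a_i]} f$ weakly-$*$ in $L^\infty$. The bounds from Phase (i) give $C^{2,\alpha}_{\mathrm{loc}}$ compactness, so a subsequence $u_{R_k}$ converges to a global convex $C^{2,\alpha}$ solution $\bar u$ of $\det D^2 \bar u = \bar f$ on $\mathbb{R}^n$. The classical J\"orgens--Calabi--Pogorelov theorem forces $\bar u(x) = \frac{1}{2} x^T A x$ for some positive-definite $A$ with $\det A = \bar f$. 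A uniqueness argument, based on the fact that any two subsequential limits would give compatible far-field expansions of the same $u$, shows $A$ is independent of the subsequence.

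\textbf{Phase (iii): Periodic remainder.} For $1 \leq i \leq n$ set $w_i(x) := u(x + a_i e_i) - u(x)$. By periodicity of $f$, both $u(\cdot + a_i e_i)$ and $u$ solve the same Monge-Amp\`ere equation. Writing
$0 = \det D^2 u(x+a_i e_i) - \det D^2 u(x) = \int_0^1 \mathrm{cof}(M_t)^{kl}(D^2 w_i)_{kl}\,dt$
with $M_t := (1-t)D^2 u(x) + t\,D^2 u(x+a_i e_i) > 0$ (convex combination of positive definite matrices), shows $w_i$ satisfies the uniformly elliptic equation $\tilde U^{kl}\partial_k\partial_l w_i = 0$ for $\tilde U := \int_0^1 \mathrm{cof}(M_t)\,dt$, which is positive definite and $C^\alpha$ in $x$. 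Phase (ii) yields the expansion $w_i(x) = (A a_i e_i)\cdot x + o(|x|^2)$. A Liouville-type theorem for uniformly elliptic operators with $C^\alpha$ coefficients (solutions of subquadratic growth on $\mathbb{R}^n$ are affine) forces $w_i$ to be affine. Choosing $b$ to absorb the constant terms makes $v := u - \frac{1}{2}x^T A x - b\cdot x$ satisfy $v(x + a_i e_i) = v(x)$ for every $i$. The \textbf{main obstacle} is Phase (i): uniformly bounded section eccentricity at all scales is the central quantitative use of periodicity, and without it the compactness in Phase (ii) cannot be launched.
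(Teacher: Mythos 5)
Your Phase (iii) contains the critical gap. You write $w_i(x) := u(x+a_ie_i) - u(x)$, linearize to get $\tilde U^{kl}\partial_k\partial_l w_i = 0$ with $\tilde U := \int_0^1 \mathrm{cof}(M_t)\,dt$ uniformly elliptic and $C^\alpha$, note $w_i - a_i(Ae_i)\cdot x = O(|x|^{2-\delta})$, and then invoke ``a Liouville-type theorem for uniformly elliptic operators with $C^\alpha$ coefficients (solutions of subquadratic growth on $\mathbb{R}^n$ are affine).'' No such general Liouville theorem exists. For non-divergence uniformly elliptic operators with bounded measurable (or even $C^\alpha$) coefficients, the only scale-invariant gain is the Krylov--Safonov interior H\"older exponent $\beta>0$, which can be arbitrarily small; one concludes that solutions of growth $O(|x|^{\beta'})$ with $\beta' < \beta$ are constant, not that solutions of growth $O(|x|^{2-\delta})$ are affine. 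Schauder estimates do not rescue this, because the constant in the Schauder estimate at scale $R$ involves $[\tilde U]_{C^\alpha(B_R)}$, and there is no reason these seminorms stay bounded as $R \to \infty$. The versions of a Liouville theorem that would suffice require the coefficients $\tilde U(x)$ either to be constant, or to converge to a constant matrix as $|x|\to\infty$, or to be periodic (Avellaneda--Lin type homogenization Liouville). None of these is available a priori: showing $\tilde U(x) \to \mathrm{cof}(A)$, i.e. $D^2u(x)\to A$ in a strong sense, is essentially equivalent to the periodicity conclusion you are trying to prove, so the argument is circular at this point.

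Phase (i) is also sketched too loosely to evaluate: the sentence ``a blow-up of eccentricity along $h_k\to\infty$ would produce, after translating by suitable lattice multiples, incompatible asymptotic solutions'' gestures at the right kind of compactness-and-contradiction argument but omits the actual mechanism (in \cite{CL,CL2} bounded eccentricity of sections at all heights is a consequence of the asymptotic expansion $|u-\tfrac12 x^TAx|\le C|x|^{2-\delta}$, which is proved first, not after the Hessian bound). In the $C^\alpha$ setting, interior $C^{2,\alpha}$ does then give a global Hessian bound, and this is indeed the route \cite{CL2} takes; but the present paper deliberately avoids that step because the Hessian bound fails for $f\in L^\infty$.

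The paper's proof (both of Theorem A via \cite{CL2} and of its $L^\infty$ extension here) is structured very differently at the stage where you go wrong. Instead of linearizing the difference $u(\cdot+a_ie_i)-u$, it (a) constructs a periodic Alexandrov solution $v$ of $\det(A+D^2v)=f$ on the torus, (b) proves the sharp identity $\sup_{\mathbb{R}^n}\Delta^2_e u = e^TAe/\|e\|^2$ for all lattice vectors $e$ (Proposition \ref{concavity}), which makes $h := u-\tfrac12 x^TAx - b\cdot x - v$ ``discretely concave'', (c) proves $h$ is bounded above by a doubling argument, and (d) runs the Caffarelli--Guti\'errez Harnack inequality for linearized Monge--Amp\`ere (Theorems B and C), whose constants depend only on $n$, $\inf f$, $\sup f$ and not on any modulus of continuity of $D^2u$. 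That is precisely why their argument survives the passage from $f\in C^\alpha$ to $f\in L^\infty$, whereas yours cannot. To repair your Phase (iii) you would need to replace the generic elliptic Liouville theorem by the linearized Monge--Amp\`ere Harnack machinery, or supply an independent proof that $D^2u(x)\to A$ with a rate, neither of which is in your proposal.
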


\medskip

For applications, it is desirable to study the problem with less regularity assumption on $f$. It was conjectured in \cite{CL2}, see Remark 0.5 there, that Theorem A remains valid for $f\in L^\infty(\mathbb{R}^n)$ satisfying $0<\inf_{\mathbb{R}^n} f\leq \sup_{\mathbb{R}^n}f<\infty$. We confirm the conjecture in Theorem \ref{main theorem} below.

\medskip

We first recall the definition of a solution of (\ref{eqn}) in the Alexandrov sense.

Let $u$ be a convex function in an open set $\Omega$ of $\mathbb{R}^n$. For $y\in \Omega$, denote
\begin{align*}
\nabla u(y)=\{p\in \mathbb{R}^n|u(x)\geq u(y)+p\cdot(x-y),\forall x\in \Omega\}
\end{align*}
the generalized gradient of $u$ at $y$.

For $f\in L^\infty(\Omega)$ with $f\geq 0$ a.e.,  $u$ is called a solution of
\begin{align*}
\det(D^2u)=f,\quad in \quad \Omega
\end{align*}
in the Alexandrov sense if $u$ is a convex function in $\Omega$ and $|\nabla u(O)|=\int_Of$, for every open set $O\subset \Omega$.

Similarly, for a symmetric $n\times n$ matrix $A$, we say that $v\in C^{0,1}(\Omega)$ is a solution
\begin{align*}
\det(A+D^2v)=f,\quad in \quad \Omega
\end{align*}
in the Alexandrov sense if $u:=\frac{1}{2}x^TAx+v$ is convex in $\Omega$ and satisfies
\begin{align*}
\det(D^2u)=f,\quad in \quad \Omega
\end{align*}
in the Alexandrov sense.

\medskip

Our first result is the existence and uniqueness of periodic solutions for $f\in L^\infty$.
\begin{theorem}\label{existence}
Let $f\in L^\infty(\mathbb{R}^n)$ with $0<\inf_{\mathbb{R}^n} f\leq \sup_{\mathbb{R}^n} f<\infty$ satisfy (\ref{periodic}) a.e., and let $A$ be a symmetric positive definite  $n\times n$ matrix satisfying
\begin{align*}
\det A=\strokedint_{\prod_{1\leq i\leq n}[0,a_i]}f.
\end{align*}
Then there exists a unique (up to addition of constants) $v\in C^{0,1}(\mathbb{R}^n)$ which is $a_i$-periodic in the $i$-th variable, such that
\begin{align}\label{eq-existence}
\det(A+D^2v)=f,\quad in\quad \mathbb{R}^n
\end{align}
in the Alexandrov sense. Moreover, $v\in C^{1,\alpha}(\mathbb{R}^n)$ for some $0<\alpha<1$.
\end{theorem}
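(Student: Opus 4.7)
The plan is to establish existence by smooth approximation combined with Caffarelli's $C^{1,\alpha}$ regularity theory for Monge--Amp\`ere with bounded right-hand side, and uniqueness by a touching argument exploiting periodicity.

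First, I would mollify $f$ to obtain a sequence $f_k\in C^\infty(\mathbb{R}^n)$ of positive periodic functions with $\inf_{\mathbb{R}^n} f\le f_k\le\sup_{\mathbb{R}^n} f$ and $f_k\to f$ a.e., rescaled by constants tending to $1$ so that $\strokedint f_k=\det A$ exactly. For each $k$, classical existence theory for the Monge--Amp\`ere equation on the torus $\mathbb{T}^n=\mathbb{R}^n/\prod_i a_i\mathbb{Z}$ (via the continuity method, the compatibility $\strokedint f_k=\det A$ being exactly the required balance condition) produces a smooth periodic solution $v_k$ of $\det(A+D^2v_k)=f_k$, unique up to additive constants, which I normalize by $\min_{\mathbb{R}^n} v_k=0$. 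Equivalently, $u_k:=\frac{1}{2}x^TAx+v_k$ is a smooth convex solution of $\det D^2 u_k=f_k$ on $\mathbb{R}^n$ of the form described by Theorem A.

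The main obstacle is a uniform $C^{1,\alpha}$ bound on $v_k$. I would first show $\|v_k\|_{L^\infty}\le C$ with $C$ depending only on $n$, $A$, the periods, $\lambda:=\inf f>0$, and $\Lambda:=\sup f$. The coercive background $\frac{1}{2}x^TAx$ forces $u_k$ to attain a global minimum, and applying Alexandrov's inequality on convex sections of $u_k$ centered at this minimum, using $\lambda\le\det D^2 u_k\le\Lambda$, gives two-sided control on the geometry of these sections, hence on the oscillation of $u_k$ over one fundamental cube, and thus on $\|v_k\|_{L^\infty}$. With the $L^\infty$ bound in hand, the sections $S_t(u_k,x_0)$ of scale of order one at any point $x_0\in\mathbb{R}^n$ are comparable to ellipsoids with constants independent of $x_0$ and $k$: the quadratic background supplies the shape, while the bounded periodic perturbation cannot distort it more than a fixed amount. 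Caffarelli's interior $C^{1,\alpha}$ regularity theorem for Alexandrov solutions of Monge--Amp\`ere with right-hand side in $[\lambda,\Lambda]$ then yields $\|v_k\|_{C^{1,\alpha}(\mathbb{R}^n)}\le C$ for some $\alpha\in(0,1)$.

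Passing to the limit, Arzel\`a--Ascoli gives a subsequence $v_k\to v$ in $C^{1,\beta}_{\mathrm{loc}}$ for every $\beta<\alpha$, with limit $v\in C^{1,\alpha}(\mathbb{R}^n)$ periodic. By the weak continuity of the Monge--Amp\`ere measure under local uniform convergence of convex functions and by $f_k\to f$ in $L^1_{\mathrm{loc}}$, $v$ is an Alexandrov solution of $\det(A+D^2v)=f$. For uniqueness, let $v^{(1)},v^{(2)}$ be two periodic solutions; the difference $w:=v^{(1)}-v^{(2)}$ is continuous and periodic, attaining its maximum $M$ at some $x_0\in\mathbb{R}^n$. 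Writing $\tilde u:=\frac{1}{2}x^TAx+v^{(1)}-M$ and $u:=\frac{1}{2}x^TAx+v^{(2)}$, both are convex Alexandrov solutions of $\det D^2=f$ on $\mathbb{R}^n$ satisfying $\tilde u\le u$ everywhere and $\tilde u(x_0)=u(x_0)$. I would then invoke the strong maximum principle for Monge--Amp\`ere (which follows from the strict convexity of Alexandrov solutions with bounded positive right-hand side together with the ABP inequality applied to the non-negative difference $u-\tilde u$) to conclude $\tilde u\equiv u$, i.e., $v^{(1)}-v^{(2)}\equiv M$, giving uniqueness up to additive constants.
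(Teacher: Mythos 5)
Your existence argument follows essentially the same route as the paper: mollify $f$ with the correct average, solve the smooth periodic problem (the paper cites Y.Y.~Li's result, you invoke the continuity method, which amounts to the same thing), extract uniform $C^0$ and gradient bounds, and invoke Caffarelli's $C^{1,\alpha}$ regularity for Alexandrov solutions before passing to the limit. That part is fine, though your sketch of the $L^\infty$ bound via Alexandrov's inequality on sections is more vague than simply quoting the existing estimate $|\tilde v_\epsilon|+|\nabla\tilde v_\epsilon|\le C(A)$ from \cite{Li}.

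The uniqueness step, however, has a genuine gap. You propose to conclude from $\tilde u\le u$, $\tilde u(x_0)=u(x_0)$, and $\det D^2\tilde u=\det D^2 u=f$ that $\tilde u\equiv u$, by a ``strong maximum principle for Monge--Amp\`ere'' which you say follows from strict convexity plus ABP applied to $w:=u-\tilde u$. But there is no uniformly elliptic linear equation for $w$ to which ABP applies. Formally $w$ satisfies $a_{ij}\partial_{ij}w=0$ with $a_{ij}=\int_0^1 \mathrm{cof}\bigl(tD^2u+(1-t)D^2\tilde u\bigr)_{ij}\,dt$, and this coefficient matrix is only degenerate elliptic: its eigenvalues are controlled only through $\det$ bounds, not two-sided eigenvalue bounds, precisely because $f\in L^\infty$ rules out $C^{1,1}$ estimates (the Wang counterexamples). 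ABP, Krylov--Safonov, and the classical strong maximum principle all fail in this degenerate setting. Moreover, for Alexandrov solutions the Hessians are not functions, so the linearization is not even defined pointwise; one must first approximate by smooth solutions of Dirichlet problems on suitable level sets, as the paper does. The correct replacement for ABP/Harnack here is the Caffarelli--Guti\'errez Harnack inequality for the linearized Monge--Amp\`ere operator (Theorems~B and~C in the paper), which exploits the structure $a_{ij}=\det(D^2\phi)\,\phi^{ij}$ and depends only on $\lambda\le\det D^2\phi\le\Lambda$. The paper's uniqueness proof approximates $u$ and $\hat u$ by smooth solutions $u_\epsilon$, $\hat u_\epsilon$, observes that $u_\epsilon-\hat u_\epsilon-\delta_\epsilon\ge 0$ is simultaneously a sub- and supersolution of two such degenerate linearized operators, applies the Caffarelli--Guti\'errez Harnack estimate on $[-1,1]^n$, and passes to the limit to force the difference to vanish; periodicity then propagates this to all of $\mathbb{R}^n$. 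Without the Caffarelli--Guti\'errez machinery (or an equivalent substitute), your touching argument does not close.
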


\begin{remark}
If $f\geq 0$, the existence part still holds by passing to limit.
\end{remark}

\begin{remark} 
If the smoothness assumption of $f$ in Theorem \ref{existence} is strengthened to $f\in C^{k,\alpha}(\mathbb{R}^n)$, $k\geq 0$, $0<\alpha<1$, there exists a solution $v\in C^{k+2,\alpha}(\mathbb{R}^n)$. For $k\geq 4$, the method in \cite{Li} is applicable; for $0\leq k\leq 3$, this can be established by a smooth approximation of $f$ based on the $C^{2,\alpha}$ theory of Caffarelli in \cite{C3}, together with the $C^0$ estimate of solutions in \cite{Li}. A different proof of  these results under the assumption that $0<f\in C^{k,\alpha}(\mathbb{R}^n)$, $k\geq 0$, $0<\alpha<1$, was given in \cite{C4}. Monge-Amp\`ere equations on Hessian manifolds were studied in \cite{CY2} and \cite{CV}.
 \end{remark}

\medskip

Now we state our main theorem.

\begin{theorem}\label{main theorem}
Let $f\in L^\infty(\mathbb{R}^n)$ with $0<\inf_{\mathbb{R}^n} f\leq \sup_{\mathbb{R}^n}f<\infty$ satisfy (\ref{periodic}) a.e., 
and let $u$ be a solution of (\ref{eqn}) in the Alexandrov sense. Then there exist $b\in \mathbb{R}^n$ and a symmetric positive definite $n\times n$ matrix $A$ with $\det A=\strokedint_{\prod_{1\leq i\leq n}[0,a_i]}f$, such that $v:=u-\frac{1}{2}x^T Ax-b\cdot x$ is $a_i$-periodic in the $i$-th variable. Moreover,  $v\in C^{1,\alpha}(\mathbb{R}^n)$ for some $0<\alpha<1$.
\end{theorem}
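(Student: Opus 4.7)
The plan is to combine Theorem \ref{existence} with an asymptotic analysis of $u$ at infinity and a rigidity argument that identifies $u$ with an explicit ``quadratic plus periodic'' model. Throughout, write $\bar f := \strokedint_{\prod_{1\leq i\leq n}[0,a_i]} f$.

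\emph{Step 1 (Candidate family).} For every symmetric positive definite matrix $A$ with $\det A=\bar f$, Theorem \ref{existence} supplies a periodic Alexandrov solution $v_A$ of $\det(A+D^2 v_A)=f$. The convex function
\begin{align*}
U_{A,b,c}(x):=\tfrac12 x^T A x+b\cdot x+c+v_A(x)
\end{align*}
is then an Alexandrov solution of $\det(D^2 U)=f$ on $\mathbb{R}^n$ for every $b\in\mathbb{R}^n$ and $c\in\mathbb{R}$. The theorem will be proved once $u$ is matched with some member of this three-parameter family.

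\emph{Step 2 (Blowdown at infinity).} Using $0<\lambda\le f\le\Lambda$, Caffarelli's theory of sections yields two-sided volume bounds $|S_t(x_0,u)|\asymp t^{n/2}$ and interior $C^{1,\alpha}$ regularity of $u$. I would pick $p_0\in\nabla u(0)$ and consider
\begin{align*}
u_R(x):=R^{-2}\bigl(u(Rx)-u(0)-R\,p_0\cdot x\bigr),
\end{align*}
which is convex and satisfies $\det(D^2 u_R)(x)=f(Rx)$ in the Alexandrov sense. Uniform local $C^{1,\alpha}$ estimates extract a subsequence $u_{R_k}\to u_\infty$ locally uniformly; since $f(R_k\,\cdot\,)\rightharpoonup \bar f$ weak-$*$ in $L^\infty$ by periodicity, stability of the Monge-Amp\`ere measure implies that $u_\infty$ is a convex Alexandrov solution of $\det(D^2 u_\infty)=\bar f$ on $\mathbb{R}^n$. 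Caffarelli's viscosity version \cite{C} of the J\"orgens-Calabi-Pogorelov theorem then forces $u_\infty(x)=\tfrac12 x^T A x$ with $\det A=\bar f$. Uniqueness of $A$ along subsequences should follow because the asymptotic cone of the convex epigraph of $u$ is an intrinsic object, shift-invariant under translations by the periods of $f$.

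\emph{Step 3 (Rigidity via translation invariance).} With $A$ fixed, set $\tilde u(x):=u(x+a_i e_i)$. Periodicity of $f$ makes $\tilde u$ another Alexandrov solution of $\det(D^2\cdot)=f$, and Step 2 ensures $u$ and $\tilde u$ have the same quadratic blowdown $\tfrac12 x^T A x$. Writing $w_i:=\tilde u-u$ and interpolating $u_s:=(1-s)u+s\tilde u$, one gets
\begin{align*}
0=\det(D^2\tilde u)-\det(D^2 u)=\int_0^1\det(D^2 u_s)\,\mathrm{tr}\bigl((D^2 u_s)^{-1}D^2 w_i\bigr)ds =: \mathcal L w_i.
\end{align*}
On large normalized sections, John's lemma combined with the uniform section estimates makes $\mathcal L$ uniformly elliptic with bounded measurable coefficients. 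Since the quadratic blowdowns of $u$ and $\tilde u$ coincide, $w_i$ has sublinear growth at infinity in an averaged sense, and a Liouville-type argument for $\mathcal L$ (Alexandrov-Bakelman-Pucci on expanding sections, or Krylov-Safonov/Moser Harnack) forces $w_i$ to be affine. Running this for $i=1,\ldots,n$ and choosing $b$ appropriately yields that $v(x):=u(x)-\tfrac12 x^T A x-b\cdot x$ is $a_i$-periodic in the $i$-th variable. The $C^{1,\alpha}$ regularity of $v$ then follows from Theorem \ref{existence} applied to the now-established periodic solution.

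The principal obstacle is Step 3: promoting the \emph{asymptotic} identification of Step 2 to an \emph{exact} one. Differences of Monge-Amp\`ere solutions do not directly satisfy an elliptic maximum principle, and for merely $L^\infty$ data the differentiation trick of \cite{CL2} is unavailable. The delicate point is to show that the linearized mixed operator $\mathcal L$ is uniformly elliptic on the sections of the interpolating family $\{u_s\}_{s\in[0,1]}$ at large scales---this is where the two-sided section volume bounds and the $C^{1,\alpha}$ regularity of $u$ enter---and then to turn ``sublinear growth plus $\mathcal L w_i=0$'' into affinity. This is where the $L^\infty$ hypothesis on $f$ genuinely matters and replaces the pointwise differentiation arguments available in the $C^\alpha$ setting of \cite{CL2}.
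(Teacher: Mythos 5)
Your Step 1 and Step 2 align with the paper's Proposition \ref{asymp bound} (blowdown at infinity produces the matrix $A$). The proposal then diverges in Step 3, which is precisely where the real work lives, and here there are two genuine gaps.

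\emph{First gap: the growth bound is not sublinear.} Setting $\tilde u(x)=u(x+a_ie_i)$, the only quantitative information the blowdown gives is $|u(x)-\tfrac12x^TAx|\le C_1|x|^{2-\delta}$ for some small $\delta>0$. Subtracting the affine function $a_ie_i^TAx+\mathrm{const}$ from $w_i=\tilde u-u$ leaves a remainder controlled only by $O(|x|^{2-\delta})$, which is sub-quadratic but typically super-linear. Krylov-Safonov/ABP Liouville theorems for uniformly elliptic operators with bounded measurable coefficients conclude affinity only for solutions that are bounded on one side or genuinely sublinear; they do not apply at the $O(|x|^{2-\delta})$ scale. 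In the $C^\alpha$ case of \cite{CL2} one has a uniform two-sided bound on $D^2u$, from which $w_i-a_ie_i^TAx$ is \emph{bounded}, and Harnack closes the argument; for $L^\infty$ data that bound is unavailable by the counterexamples in \cite{Wang}, and nothing in the proposal replaces it.

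\emph{Second gap: the linearization is not justified.} The identity $\mathcal L w_i=0$ requires $u,\tilde u\in W^{2,1}_{loc}$ (at least), which for $L^\infty$ data is not known. You acknowledge this ("for merely $L^\infty$ data the differentiation trick of \cite{CL2} is unavailable") but offer no workaround; the closing paragraph describes the obstacle rather than overcoming it. The paper's route around both gaps is to work with \emph{second} incremental quotients $\Delta^2_eu$ instead of first differences. One approximates $f$ by mollifications $f_\epsilon$, solves Dirichlet problems to get smooth $u_{M,\epsilon}$, and notes that $\Delta^2_{\tilde e}u_{M,\epsilon}$ is a subsolution of the linearized operator $F_{ij}(D^2u_{M,\epsilon})\partial_{ij}$ by concavity of $\det^{1/n}$ (Lemma 2.2 in \cite{CL2}). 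Combined with an $L^1$ bound and the local maximum principle (Theorem B) this gives Proposition \ref{second incre bound}; a measure-theoretic argument via the weak Harnack gives the crucial Proposition \ref{concavity}, namely $\sup_{\mathbb{R}^n}\Delta^2_eu=e^TAe/\|e\|^2$, so $\Delta^2_e(u-Q-b\cdot x)\le 0$ for lattice $e$; this concavity, a dyadic doubling argument, and the Harnack inequality of Theorem C yield boundedness of $h=u-Q-b\cdot x-v$ (Lemma \ref{h bound}); and one final Harnack application shows $h$ is constant. None of this machinery (second differences, the saturation identity of Proposition \ref{concavity}, the dyadic argument in Lemma \ref{h bound}) appears in the proposal, and without it Step 3 does not close.

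In short: your high-level skeleton (blowdown to identify $A$, then rigidity) is the right shape, but the rigidity step as proposed—first differences, linearized Liouville from sublinear growth—relies on bounds that are not available for $L^\infty$ data, and you correctly flag the obstruction without resolving it. The paper's actual argument is structurally different in the decisive step and uses second incremental quotients throughout.
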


\begin{question}
Does the conclusion of the theorem, except for the 
$C^{1,\alpha}$ regularity of $v$,
 still hold if $f\geq 0$?
\end{question}

The main difficulty in proving Theorem \ref{main theorem} is that $C^2$ estimates on $u$ are no longer valid since $f$ is only bounded, which can be seen from the counter examples in \cite{Wang}. The proof in \cite{CL2} for Theorem A makes use of the fact that $D^2u$ is uniformly bounded in a non-trivial way, thus we can not carry out the same proof in the current setting. The key observation in our proof is that we can still prove the main propositions in \cite{CL2} without the uniform bounds of $D^2u$, which also enables us to simplify the proof of Thereom A in several ways. The proof of Theorem \ref{main theorem} follows closely the main steps in \cite{CL2}.

\medskip

The organization of the paper is as follows: in Section 2, we state two theorems on linearized Monge-Amp\`ere equations established by Caffarelli and Guti\'errez \cite{CG} which play crucial roles in the proof of Theorem \ref{main theorem}. In Section 3, we prove Theorem \ref{existence} about the existence and uniqueness of solutions on $\mathbb{T}^n$ which is used in the proof of Theorem \ref{main theorem}. In Section 4, we give the proof of Theorem \ref{main theorem}. We will mainly focus on the part that is different from \cite{CL2}.

\section{Preliminary}
In this section, we state two theorems on linearized Monge-Amp\`ere equations.

\begin{theoremB}\label{harnark}  \ (\cite{CG}) \
Let $\Omega$ be an open convex subset of $\mathbb{R}^n$ satisfying $B_1\subset \Omega \subset B_n$, $n\geq 2$, and let $\phi\in C^2(\bar{\Omega})$ be a convex function satisfying, for some cosntants $\lambda$ and $\Lambda$,
\begin{align*}
\begin{cases}
0<\lambda\leq \det(D^2\phi)\leq \Lambda<\infty,\quad in\quad \Omega,\\
\phi=0,\quad on\quad \partial \Omega.
\end{cases}
\end{align*}
Let $a_{ij}=\det(D^2\phi)\phi^{ij}$ be the linearization of the Monge-Amp\`ere operator at $u$.

(1) Assume that $v\in C^2(\Omega)$ satisfies
\begin{align*}
a_{ij}v_{ij}\geq f,\quad v\geq 0,\quad in \quad \Omega.
\end{align*}
Then for any $p>0, r>s>0$, there exists some $C(n,\lambda,\Lambda,p,r,s)>0$, such that
\begin{align*}
\sup_{x\in \Omega, dist(x,\partial\Omega)>r}v\leq C\left(\|v\|_{L^p(x\in \Omega, dist(x,\partial\Omega)>s)}+\|f\|_{L^n(x\in \Omega, dist(x,\partial\Omega)>s)}\right).
\end{align*}

(2) Assume that $v\in C^2(\Omega)$ satisfies
\begin{align*}
a_{ij}v_{ij}\leq f,\quad v\geq 0,\quad in \quad \Omega.
\end{align*}
Then for $r>s>0$, there exist $p_0>0$ and $C(n,\lambda,\Lambda,p_0,r,s)>0$, such that
\begin{align*}
\|v\|_{L^{p_0}(x\in \Omega, dist(x,\partial\Omega)>s)}\leq C\left(\inf_{x\in \Omega, dist(x,\partial\Omega)>r}v+\|f\|_{L^n(x\in \Omega, dist(x,\partial\Omega)>s)}\right).
\end{align*}
\end{theoremB}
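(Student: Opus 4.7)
The plan is to adapt the Krylov--Safonov / ABP strategy to the intrinsic geometry induced by $\phi$, in which the sections $S_\phi(x,t) = \{y\in \Omega : \phi(y) < \phi(x)+p\cdot(y-x)+t\}$ (with $p\in \nabla \phi(x)$) replace Euclidean balls. The key geometric input is Caffarelli's section theory: under $\lambda\leq \det(D^2\phi)\leq \Lambda$, every section admits an affine normalization $T$ with $B_1\subset T(S)\subset B_n$, and the family of sections enjoys the engulfing and doubling properties. Since $a_{ij}=\det(D^2\phi)\phi^{ij}$ transforms covariantly under affine changes of coordinates, the hypotheses $a_{ij}v_{ij}\geq f$ and $a_{ij}v_{ij}\leq f$ are preserved on a normalized section, so it suffices to prove both assertions on a single normalized section matching the standing hypothesis $B_1\subset \Omega\subset B_n$.

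The next step is an ABP-type estimate for the linearized operator, exploiting the fact that $\det(a_{ij})=\det(D^2\phi)^{n-1}\in [\lambda^{n-1},\Lambda^{n-1}]$ is bounded away from $0$ and $\infty$ even though $(a_{ij})$ itself is not uniformly elliptic. This yields, for $a_{ij}v_{ij}\leq f$ with $v\geq 0$ on $\partial\Omega$, a bound of the form $\inf_\Omega v \geq -C\|f\|_{L^n(\{v=\Gamma v\})}$ where $\Gamma v$ is the convex envelope. From this I would derive the critical density lemma: if $v\geq 0$, $a_{ij}v_{ij}\leq f$, and $\inf_{S_\phi(0,t/2)}v\leq 1$, then $|\{v\leq M\}\cap S_\phi(0,t/2)|\geq c_0\,|S_\phi(0,t/2)|$ for a universal $M$. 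A Calder\'on--Zygmund decomposition built on sections (rather than Euclidean cubes), using the engulfing property as a substitute for the Vitali lemma, then propagates this through a geometric chain of sections to yield power decay $|\{v>M^k\}\cap \Omega'|\leq \varepsilon^k|\Omega'|$ on any $\Omega'\Subset\Omega$, which is equivalent to the $L^{p_0}$ bound in part~(2).

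Part~(1) follows in the same framework by a local maximum principle / Moser iteration argument: one applies the scaled ABP estimate on a chain of nested normalized sections to obtain the $L^p\to L^\infty$ bound with the extra term $\|f\|_{L^n}$, where the inhomogeneous contribution is absorbed at each scale using the doubling property. The main obstacle is the Calder\'on--Zygmund / Vitali-type decomposition adapted to sections: standard Euclidean covering arguments fail because sections are not doubling in the Euclidean metric and are not convex under dilation, so one must exploit the engulfing property (any two overlapping sections are comparable after a controlled dilation) to build a Besicovitch-type selection. Making this quantitative, and tracking the dependence of constants only on $n,\lambda,\Lambda,p,r,s$, is the technical heart of the Caffarelli--Guti\'errez argument.
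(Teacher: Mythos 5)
Your proposal correctly reconstructs the Caffarelli--Guti\'errez framework from scratch: sections of $\phi$ as the intrinsic balls, affine normalization to $B_1\subset T(S)\subset B_n$, affine covariance of $a_{ij}=\det(D^2\phi)\,\phi^{ij}$, the ABP bound driven by $\det(a_{ij})=\det(D^2\phi)^{n-1}\in[\lambda^{n-1},\Lambda^{n-1}]$, the critical-density lemma, the section-adapted Calder\'on--Zygmund decomposition via engulfing, and the resulting power decay. That is indeed the machinery on which Theorem~B rests. The paper, however, does not reprove any of this: its ``proof'' is a one-paragraph citation. It simply records that Theorem~1 and Theorem~4 of Caffarelli--Guti\'errez hold for supersolutions, that the measure step in the proof of their Lemma~4.1 works for subsolutions, and that from these measure estimates the local maximum principle~(1) and the weak Harnack inequality~(2) follow exactly as in Theorem~4.8 of Caffarelli--Cabr\'e. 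So the substantive difference is one of packaging: you redo the underlying analysis, while the paper assembles the stated asymmetric subsolution/supersolution forms by splitting and reusing the published CG lemmas inside the standard Krylov--Safonov template of CC. Your version is more self-contained; the paper's is shorter and makes clear that nothing new is being claimed.

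One small correction: you describe part~(1) as a ``Moser iteration'' argument. In this nondivergence, merely measurable-coefficient setting there is no Caccioppoli/energy inequality available, so Moser's scheme is not the right mechanism. The local maximum principle in Theorem~4.8 of Caffarelli--Cabr\'e is obtained from the same ABP-plus-measure-estimate covering argument applied to subsolutions (a geometric iteration on nested sets, not an $L^p\to L^{p\cdot\kappa}$ energy bootstrap). Your description of ``the scaled ABP estimate on a chain of nested normalized sections'' is actually the correct mechanism; only the label ``Moser iteration'' is off. With that terminological fix, your outline matches the argument the paper delegates to \cite{CG} and \cite{CC}.
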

\begin{proof}
We notice that Theorem 1 and Theorem 4 in \cite{CG} hold for supersolutions, and thus the measure part of the proof of Lemma 4.1 in \cite{CG} holds for subsolutions, the rest follows exactly those of Theorem 4.8 in \cite{CC}. We remark that (1) is called local maximum principle and (2) is called weak Harnack inequality in literature.
\end{proof}

\begin{theoremC}\label{harnark 2}   \ (\cite{CG})\
Let $\Omega$ and $\tilde{\Omega}$ be open convex subsets of $\mathbb{R}^n$ satisfying $B_1\subset \Omega,\tilde{\Omega} \subset B_n$, $n\geq 2$, and let $\phi\in C^2(\bar{\Omega})$ and $\tilde{\phi}\in C^2(\bar{\tilde{\Omega}})$ be convex functions satisfying, for some constants $\lambda$ and $\Lambda$,
\begin{align*}
\begin{cases}
0<\lambda\leq \det(D^2\phi)\leq \Lambda<\infty,\quad in\quad \Omega,\\
\phi=0,\quad on\quad \partial \Omega.
\end{cases}
\end{align*}
\begin{align*}
\begin{cases}
0<\lambda\leq \det(D^2\tilde{\phi})\leq \Lambda<\infty,\quad in\quad \tilde{\Omega},\\
\tilde{\phi}=0,\quad on\quad \partial \tilde{\Omega}.
\end{cases}
\end{align*}
Let $a_{ij}=\det(D^2\phi)\phi^{ij}$ and $\tilde{a}_{ij}=\det(D^2\tilde{\phi})\tilde{\phi}^{ij}$ be the linearizations of the Monge-Amp\`ere operator at $u$ and $\tilde{u}$ respectively.

Assume that $v\in C^2(\Omega)$ with $v\geq 0$ satisfies
\begin{align*}
\begin{cases}
a_{ij}v_{ij}\geq 0,\quad in \quad \Omega,\\
 \tilde{a}_{ij}v_{ij}\leq 0,\quad in \quad \tilde{\Omega}.
\end{cases}
\end{align*}

Let $O\subset \bar{O}\subset \Omega\cap\tilde{\Omega}$ be an open set, then there exist constants $\alpha(n,\lambda,\Lambda,O)$ and $C(n,\lambda,\Lambda,O)$ such that
\begin{align*}
\sup_{O}v\leq C\inf_{O}v,
\end{align*}
and
\begin{align*}
\|v\|_{C^\alpha(O)}\leq C.
\end{align*}
\end{theoremC}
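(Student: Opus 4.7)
The plan is to deduce the Harnack inequality by chaining parts (1) and (2) of Theorem B through a common $L^{p_0}$ norm, then bootstrap to H\"older continuity via the standard oscillation-decay iteration. Since $v \ge 0$ with $a_{ij} v_{ij} \ge 0$ on $\Omega$, $v$ is a nonnegative subsolution for the first linearized operator; part (1) of Theorem B (with $f \equiv 0$), applied with interior distances $r_1 > s_1 > 0$ chosen so that $r_1 < \mathrm{dist}(\bar O, \partial \Omega)$, yields
\[
\sup_O v \;\le\; C\,\|v\|_{L^{p_0}(F_1)}, \qquad F_1 := \{x \in \Omega : \mathrm{dist}(x, \partial \Omega) > s_1\}.
\]
Symmetrically, $\tilde a_{ij} v_{ij} \le 0$ on $\tilde\Omega$ makes $v$ a nonnegative supersolution for the second operator, and part (2) applied with $\tilde r_1 > \tilde s_1 > 0$ (with $\tilde r_1 < \mathrm{dist}(\bar O, \partial \tilde\Omega)$) gives
\[
\|v\|_{L^{p_0}(F_2)} \;\le\; C\,\inf_O v, \qquad F_2 := \{x \in \tilde\Omega : \mathrm{dist}(x, \partial \tilde\Omega) > \tilde s_1\}.
\]
A common exponent $p_0 > 0$ is obtained by taking the smaller of the two supplied by Theorem B.

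To chain these estimates, I arrange $F_1 \subset F_2$. Since $\bar O \Subset \Omega \cap \tilde\Omega$, taking $s_1$ close to $\mathrm{dist}(\bar O, \partial \Omega)$ shrinks $F_1$ to a small interior neighborhood lying compactly in $\tilde\Omega$; then $\tilde s_1$ can be chosen small enough that $F_2 \supset F_1$. Monotonicity of the $L^{p_0}$ norm in the domain then gives $\|v\|_{L^{p_0}(F_1)} \le \|v\|_{L^{p_0}(F_2)}$, and chaining produces $\sup_O v \le C \inf_O v$ with $C = C(n,\lambda,\Lambda,O)$. For the H\"older bound, I iterate the Harnack on shrinking nested neighborhoods around each point of $O$: on suitable $O_r \Subset O_{2r}$, applying the Harnack to shifted nonnegative functions built from $v$ (using a single global shift $m = \inf_\Omega v$ chosen once, so that nonnegativity on $\Omega$ is preserved) yields a geometric oscillation decay $\mathrm{osc}_{O_r} v \le \theta\, \mathrm{osc}_{O_{2r}} v$ for some $\theta < 1$ uniform in $r$, and iteration produces $v \in C^\alpha(O)$ with $\alpha = -\log_2 \theta$.

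The main obstacle is the geometric matching $F_1 \subset F_2$. When $\Omega$ and $\tilde\Omega$ differ substantially in shape, a level set of $\mathrm{dist}(\cdot,\partial\Omega)$ need not sit inside $\tilde\Omega$, so some care is required in choosing the distance parameters; in the cleanest treatment one replaces Euclidean distance with sections of $\phi$ and $\tilde\phi$, the natural ``balls'' adapted to the Monge--Amp\`ere operators, as in the original framework of Caffarelli and Guti\'errez. A secondary subtlety in the H\"older step is that the most convenient candidates $v - m_r$ and $M_r - v$ for the oscillation decay are nonnegative only on a neighborhood of the base point, so one cannot directly invoke the Harnack we have proved; using a single global shift as above, or performing the iteration entirely in the section-based setting, avoids this issue.
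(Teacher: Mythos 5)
The paper does not actually prove Theorem~C; it is stated verbatim with a citation to Caffarelli--Guti\'errez \cite{CG}, so there is no in-paper argument to compare against. Your overall scaffold---chain the local maximum principle (Theorem~B(1) applied to $v$ as a nonnegative subsolution on $\Omega$) with the weak Harnack inequality (Theorem~B(2) applied to $v$ as a nonnegative supersolution on $\tilde\Omega$) through a common $L^{p_0}$ norm, then iterate for H\"older decay---is the standard Krylov--Safonov route and is morally how \cite{CG} proceed. But the crucial step of your chain, $F_1\subset F_2$, is not a technicality you can finesse: it genuinely fails for the stated version of Theorem~B. You need $s_1<r_1<\mathrm{dist}(\bar O,\partial\Omega)$, so $s_1$ is capped; if (say, in the plane) $\Omega$ is the ellipse $\{x^2/4+y^2<1\}$, $\tilde\Omega=B_1$ and $O=B_{1/2}$, then $\mathrm{dist}(\bar O,\partial\Omega)=1/2$, so $s_1<1/2$, and yet $F_1=\{x\in\Omega:\mathrm{dist}(x,\partial\Omega)>s_1\}$ contains points like $(3/2,0)$, which lie outside $\tilde\Omega$ entirely. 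So $F_1\not\subset F_2$ and the middle inequality breaks. The Euclidean-distance formulation of Theorem~B cannot deliver the nesting; the resolution is exactly the section-geometry machinery of \cite{CG} (sections of $\phi$ and $\tilde\phi$ are affinely comparable to balls, so appropriately scaled sections of the two potentials can be made to nest). You mention this in passing as a ``cleaner treatment,'' but it is in fact the core of the proof, and the proposal does not supply it.

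There is a second gap in the H\"older step. A single global shift $m=\inf_\Omega v$ preserves nonnegativity on $\Omega$, but the resulting Harnack $\sup_{O_r}(v-m)\le C\inf_{O_r}(v-m)$ bounds $\mathrm{osc}_{O_r}v$ by $C\inf_{O_r}(v-m)$, which has no relation to $\mathrm{osc}_{O_{2r}}v$; you get no geometric decay. To get $\mathrm{osc}_{O_r}v\le\theta\,\mathrm{osc}_{O_{2r}}v$ you must shift by the local infimum $m_{2r}$ (and supremum $M_{2r}$), but then $v-m_{2r}$ and $M_{2r}-v$ are nonnegative only on $O_{2r}$, so the Harnack you have proved (which requires $v\ge0$ on all of $\Omega$) does not apply; moreover $M_{2r}-v$ reverses the roles of sub- and supersolution relative to $a_{ij}$ and $\tilde a_{ij}$, so one also needs the symmetric Harnack with $\Omega,\phi$ and $\tilde\Omega,\tilde\phi$ interchanged. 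Both issues are standard and fixable, but only after one has the genuinely local, section-based Harnack, which is again the part the proposal leaves out.
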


\section{Proof of Theorem \ref{existence}}
We now prove Theorem \ref{existence}. This is based on the result in \cite{Li}, together with the regularity theory of Caffarelli \cite{C2}.
\begin{proof}
Since Monge-Amp\`ere equations are affine invariant, we may assume without loss of generality that $a_i=1$ for all $i$, and $f$ satisfies $\int_{[0,1]^n}f=1$.
For convenience, we identify peroidic functions as functions on $\mathbb{T}^n$.

We first establish the existence part.

Let $\rho\in C_c^\infty(B_1)$, $\int_{B_1}\rho=1$. For $\epsilon>0$, $\rho_\epsilon(x)=\epsilon^{-n}\rho(\epsilon x)$, let
\begin{align}\label{mollification}
f_\epsilon(x)=\int_{\mathbb{R}^n}\rho_\epsilon(x-y)f(y)dy
\end{align}
be the mollification of $f$. It is clear that $f_\epsilon$ is periodic. Define 
\begin{align*}
\tilde{f}_\epsilon=f_\epsilon-\strokedint_{\mathbb{T}^n}f_\epsilon+\det A.
\end{align*} 
It follows that $\strokedint_{\mathbb{T}^n}\tilde{f}_\epsilon=\det A$. 
By Theorem 2.2 in \cite{Li}, there exists a unique function $\tilde{v}_\epsilon\in C^\infty(\mathbb{T}^n)$ with $(A+D^2\tilde{v}_\epsilon)>0$, $\int_{\mathbb{T}^n}\tilde{v}_\epsilon=0$ satisfying
\begin{align*}
\det(A + D^2\tilde{v}_\epsilon)=\tilde{f}_\epsilon,\quad on\quad \mathbb{T}^n
\end{align*}
and $|\tilde{v}_\epsilon|+|\nabla \tilde{v}_\epsilon|\leq C(A)$ on $\mathbb{T}^n$. Passing to a subsequence, $\tilde{v}_\epsilon\rightarrow v$ in $ C^0(\mathbb{T}^n)$ and $v$ is a solution of (\ref{eq-existence}) in the Alexandrov sense, see e.g. Proposition 2.6 in \cite{Fi}.
The $C^{1,\alpha}$ regularity of $v$ for some $\alpha\in (0, 1)$ follows 
from Theorem 2 in \cite{C2}.

\medskip

Now we establish the uniqueness part. Suppose that there exist two solutions $v$ and $\hat{v}$. Without loss of generality, assume
\begin{align*}
\min_{\mathbb{T}^n}(v-\hat{v})=0.
\end{align*}

Then $u(x):=\frac{1}{2}x^TAx+v(x)$ and $\hat{u}(x):=\frac{1}{2}x^TAx+\hat{v}(x)$ are solutions of (\ref{eqn}) in the Alexandrov sense. 

Since $v$ is bounded, we can find $M>0$ large enough such that 
\begin{align*}
\Omega_M=\{x\in \mathbb{R}^n|u(x)<M\}
\end{align*}
constains $[-2,2]^n$.

Let $u_{\epsilon}\in C^0(\bar{\Omega}_M)\cap C^\infty(\Omega_M)$ be the solution of the following Dirichlet problem (see e.g. Proposition 2.4 in \cite{CL})
\begin{align*}
\begin{cases}
\det(D^2u_{\epsilon}(x))=f_\epsilon(x),\quad in \quad \Omega_M,\\
u_{\epsilon}(x)=M,\quad on\quad \partial\Omega_M.
\end{cases}
\end{align*}
By a barrier argument, $u_\epsilon(x)-M\geq -Cdist(x,\partial\Omega_M)^{\frac{2}{n}}$ if $n=3$ and $u_\epsilon(x)-M\geq -Cdist(x,\partial\Omega_M)^{\alpha}$ for some $0<\alpha<1$ if $n=2$, see e.g. \cite{C1} or Lemma A.1 in \cite{CL}. Since $f_\epsilon\rightarrow f$ in $L^1(\Omega_M)$ as $\epsilon\rightarrow 0$, it follows that $u_{\epsilon}\rightarrow \tilde{u}$ in $C^0(\bar{\Omega}_M)$ along a subsequence as $\epsilon\rightarrow 0$. As mentioned earlier $\tilde{u}$ satisfies $\det(D^2\tilde{u})=f$ in the Alexandrov sense. By the uniqueness of solution to Dirichlet problem in the Alexandrov sense, e.g. Corollary 2.11 in \cite{Fi}, we have $\tilde{u}=u$.

Similarly there exists a convex solution $\hat{u}_\epsilon\in  C^\infty([-2,2]^n)$ satisfying
\begin{align*}
\det(D^2\hat{u}_\epsilon(x))=f_\epsilon(x),\quad in \quad [-2,2]^n,
\end{align*}
with $\hat{u}_\epsilon \rightarrow\hat{u}$ in $C^0([-2,2]^n)$.

For any function $w$, denote $F(D^2w)=\det^{\frac{1}{n}}(D^2w)$ and $F_{ij}(D^2w)=\frac{\partial F}{\partial w_{ij}}$, since $F$ is concave, we have
\begin{align*}
F(D^2\hat{u}_{\epsilon})\leq F(D^2u_{\epsilon})+F_{ij}(D^2u_{\epsilon})\partial_{ij}(\hat{u}_{\epsilon }-u_{\epsilon }),
\end{align*}
i.e.
\begin{align*}
F_{ij}(D^2u_{\epsilon})\partial_{ij}(u_{\epsilon }-\hat{u}_{\epsilon })\leq 0.
\end{align*}
Similarly,
\begin{align*}
F(D^2u_{\epsilon})\leq F(D^2\hat{u}_{\epsilon})+F_{ij}(D^2\hat{u}_{\epsilon})\partial_{ij}(u_{\epsilon }-\hat{u}_{\epsilon }),
\end{align*}
i.e.
\begin{align*}
F_{ij}(D^2\hat{u}_{\epsilon})\partial_{ij}(u_{\epsilon }-\hat{u}_{\epsilon})\geq 0.
\end{align*}

Let $\delta_\epsilon=\min_{[-2,2]^n}( u_\epsilon-\hat{u}_\epsilon )$, then
\begin{align*}
u_\epsilon-\hat{u}_\epsilon-\delta_\epsilon\geq 0,\quad on\quad [-2,2]^n.
\end{align*}

Now by Theorem B and Theorem C, we have
\begin{align*}
\max_{[-1,1]^n} ( u_\epsilon-\hat{u}_\epsilon-\delta_\epsilon )\leq C\min_{[-1,1]^n}( u_{\epsilon}-\hat{u}_{\epsilon}-\delta_\epsilon ).
\end{align*}
Let $\epsilon\rightarrow 0$,  we have $\lim_{\epsilon\rightarrow 0}\delta_\epsilon=0$ on $\mathbb{T}^n$ as $\min_{\mathbb{T}^n}(v-\hat{v})=0$. It follows that
\begin{align*}
\max_{[-1,1]^n}(u-\hat{u})\leq C\min_{[-1,1]^n}(u-\hat{u})=0.
\end{align*}
Thus $u=\hat{u}$ on $[-1,1]^n$. It follows that $v=\hat{v}$ on $\mathbb{T}^n$. The theorem is now proved.
\end{proof}

\section{Proof of Theorem \ref{main theorem}}
We now start to prove Theorem \ref{main theorem}. As mentioned in the introduction, we will follow the main steps in \cite{CL2}. 

By the affine invariance of the problem, we may assume without loss of generality that $a_i=1$ for all $i$, and $f$ satisfies $\int_{[0,1]^n}f=1$.

We first note that Proposition 2.1 and its proof in \cite{CL2} still hold in the current setting.
\begin{prop}\label{asymp bound}
There exist a symmetric positive definite $n\times n$ matrix $A$ with $\det A=1$ and postive constants $\delta$ and $C_1$, such that
\begin{align*}
|u(x)-\frac{1}{2}x^T Ax|\leq C_1|x|^{2-\delta},\quad \forall |x|\geq 1.
\end{align*}
\end{prop}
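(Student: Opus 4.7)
The plan is to follow the proof of Proposition 2.1 in \cite{CL2}, replacing any appeal to uniform $D^2u$ bounds by the linearized Monge--Amp\`ere machinery recalled in Theorems B and C. The first step is two-sided quadratic growth: after subtracting an affine function so that $u\ge 0$ attains its minimum at the origin, a barrier argument using radial quadratics that solve $\det(D^2 w)=\inf f$ and $\det(D^2 w)=\sup f$, combined with the periodicity of $f$, yields constants $0<c\le C$ with $c|x|^2\le u(x)\le C|x|^2$ for $|x|\ge 1$. This immediately gives quantitative bounds on the sections $S_h=\{u<h\}$.

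Next I would normalize these sections. John's lemma provides an affine map $T_h$ with $B_1\subset T_h^{-1}(S_h-x_h)\subset B_n$; the quadratic growth together with the bounds on $f$ force $|\det T_h|\asymp h^{n/2}$. The rescaled function $\tilde u_h(y)=h^{-1}\bigl(u(T_hy+x_h)-h\bigr)$ is then an Alexandrov solution of $\det(D^2\tilde u_h)=\tilde f_h$ on a normalized convex domain, with $\tilde f_h$ pinched between two positive constants uniformly in $h$. Since in the rescaled variable the period of $\tilde f_h$ shrinks to zero, $\tilde f_h\to \strokedint_{[0,1]^n}f=1$ weakly as $h\to\infty$. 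Caffarelli's strict convexity and interior $C^{1,\alpha}$ theory give $C^0_{\mathrm{loc}}$ compactness for $\{\tilde u_h\}$, and any subsequential limit $\tilde u_\infty$ is an Alexandrov solution of $\det(D^2\tilde u_\infty)=1$ on $\mathbb{R}^n$. By the J\"orgens--Calabi--Pogorelov theorem in the viscosity/Alexandrov form of Caffarelli \cite{C}, $\tilde u_\infty$ is a quadratic polynomial, which singles out the symmetric positive definite $A$ with $\det A=1$ and shows that the sequence $T_h$ stabilizes up to bounded error.

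To upgrade this qualitative blow-down to the polynomial rate $|u(x)-\tfrac12 x^T Ax|\le C_1|x|^{2-\delta}$, I would run a dyadic iteration between scales $h_k=2^kh_0$. At each scale view both $u$ and $q(x):=\tfrac12 x^T Ax$ as Alexandrov solutions with comparable Monge--Amp\`ere measures on the normalized section, and apply Theorem B (local maximum principle and weak Harnack) and Theorem C (H\"older control across two different Monge--Amp\`ere linearizations) to the nonnegative function $(u-q)+\mathrm{const}$. This produces a fixed geometric contraction of $\mathrm{osc}(u-q)$ in passing from scale $h_{k+1}$ to scale $h_k$, which, translated through the $h^{n/2}$ scaling of $T_h$, yields the polynomial decay with some $\delta>0$ depending only on $n$, $\inf f$ and $\sup f$. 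The main obstacle, and the essential departure from \cite{CL2}, is the absence of uniform $C^2$ bounds on $u$, which forbids any direct comparison of $D^2u$ with $A$; the key observation is that Theorems B and C supply the needed Harnack and H\"older estimates on the difference of two Alexandrov solutions with no $C^2$ information, and the delicate point is verifying that the contraction constant in the iteration does not degenerate as $h\to\infty$, which reduces to the uniform pinching $\inf f\le\tilde f_h\le\sup f$ and the uniformly bounded eccentricity of the normalized sections.
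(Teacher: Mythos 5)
The paper does not actually give a proof of this proposition; it simply asserts that ``Proposition 2.1 and its proof in \cite{CL2} still hold in the current setting'' and moves on. This is significant for your proposal: the proof of Proposition 2.1 in \cite{CL2} is a purely geometric argument (quadratic growth via barriers, normalization of sections via John's lemma, blow-down to the J\"orgens--Calabi--Pogorelov theorem in Caffarelli's Alexandrov form, and a geometric iteration based on the eccentricity decay of sections), and it never invokes $C^2$ bounds on $u$ in the first place. That is precisely why the authors can carry it over verbatim to the $L^\infty$ setting. Your premise --- that one must ``replace any appeal to uniform $D^2u$ bounds by the linearized Monge--Amp\`ere machinery'' --- is therefore based on a misreading of where the $C^2$ bounds enter in \cite{CL2}. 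They are needed for the later propositions (corresponding to Propositions \ref{second incre bound}, \ref{concavity} and Lemma \ref{h bound} here), not for Proposition \ref{asymp bound}.

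Beyond that misdiagnosis, the third step of your sketch has a genuine gap. You propose to apply Theorems B and C to the nonnegative function $(u-q)+\text{const}$, where $q(x)=\tfrac12 x^T A x$. But $u$ and $q$ solve Monge--Amp\`ere equations with different right-hand sides ($f$ versus $1$), so by concavity of $F=\det^{1/n}$ the difference $u-q$ satisfies the \emph{inhomogeneous} inequalities
\begin{align*}
F_{ij}(D^2 u)\,\partial_{ij}(u-q) \le f^{1/n}-1, \qquad F_{ij}(D^2 q)\,\partial_{ij}(u-q) \ge f^{1/n}-1,
\end{align*}
with a source term $f^{1/n}-1$ that is bounded but changes sign and, being periodic, does not decay under rescaling. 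Theorem C, which gives the two-sided Harnack inequality and H\"older bound you want, requires the \emph{homogeneous} conditions $a_{ij}v_{ij}\ge 0$ and $\tilde a_{ij} v_{ij}\le 0$, so it does not apply. Theorem B does allow a source, but only through an additive $\|f^{1/n}-1\|_{L^n}$ term that stays of order one on each normalized section and therefore destroys the geometric contraction you are trying to extract. The natural fix --- comparing $u$ with $Q+v$, where $v$ is the periodic corrector from Theorem \ref{existence}, so that both sides have Monge--Amp\`ere measure $f$ and the linearized equation is genuinely homogeneous --- is exactly what the paper does later (this is the function $h$ in \eqref{def h} and Lemma \ref{h bound}), but using it to prove Proposition \ref{asymp bound} would create a circularity, since the blow-down to $Q$ in this proposition is used to identify the matrix $A$ in the first place. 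The geometric section-based iteration of \cite{CL2} sidesteps this entirely; your linearized-equation approach cannot be made to close at this stage of the argument.
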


\medskip

For nonzero $e\in \mathbb{R}^n$, as in \cite{CL2}, we define the second incremental quotient,
\begin{align*}
\Delta^2_eu(x)=\frac{u(x+e)+u(x-e)-2u(x)}{\|e\|^2}
\end{align*}
where $\|e\|$ denotes the Euclidean norm of $e$.

Let 
\begin{align*}
E=\{k_1e_1+\cdots+k_ne_n;k_1,\cdots,k_n \textit{ are integers, } k_1^2+\cdots+k_n^2>0\}.
\end{align*}

The following is analogous to Lemma 2.4 in \cite{CL2}. 
\begin{prop}\label{second incre bound}
\begin{align*}
\gamma:=\sup_{e\in E}\sup_{y\in \mathbb{R}^n}\Delta^2_eu(y)<\infty.
\end{align*}
\end{prop}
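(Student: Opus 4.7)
The plan is to prove the uniform bound $W_e(y_0) := u(y_0+e) + u(y_0-e) - 2u(y_0) \leq C\|e\|^2$ for all $y_0 \in \mathbb{R}^n$ and $e \in E$, which is equivalent to $\Delta^2_e u(y_0) \leq C$ and yields $\gamma \leq C < \infty$. The strategy has three parts: the subsolution property of $W_e$ for the linearized Monge-Amp\`ere operator of $u$; the local maximum principle of Theorem B applied on a large normalized section at $y_0$; and an integral cancellation estimate flowing from Proposition \ref{asymp bound}.

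First, since $e \in E \subset \mathbb{Z}^n$ and $f$ is $\mathbb{Z}^n$-periodic, both $u(\cdot \pm e)$ solve the same Monge-Amp\`ere equation as $u$, and convexity of $u$ gives $W_e \geq 0$. The concavity of $F := \det^{1/n}$ applied to $\phi := \tfrac{1}{2}(u(\cdot+e) + u(\cdot-e))$ yields $a_{ij}\partial_{ij}W_e \geq 0$ in the Alexandrov sense, where $a_{ij} := \det(D^2 u)(D^2u)^{-1}_{ij}$ is the linearization at $u$; this is justified by mollifying $f$ and passing to the limit, as in the proof of Theorem \ref{existence}. Next, fix $r \geq |y_0| + \|e\| + 1$ and let $S := S_{c_0 r^2}(y_0)$ denote the section of $u$ at $y_0$ of height $c_0 r^2$, with $c_0$ chosen in terms of $A$. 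By Proposition \ref{asymp bound}, for $r$ sufficiently large (depending only on $A$, $C_1$, $\delta$) the section $S$ is comparable to an ellipsoid of size $r$ about $y_0$; an affine map $T$ depending on $A$ places $T(S)$ between $B_1$ and $B_n$, and the rescaled convex function $\bar u := (c_0 r^2)^{-1}(u\circ T^{-1} - \ell_{y_0})$ satisfies $\det D^2 \bar u \in [\lambda, \Lambda]$ and $\bar u = 0$ on $\partial T(S)$, so the hypotheses of Theorem B hold. Applying Theorem B~(1) to the rescaled subsolution $\bar W_e \geq 0$ with $p = 1$ and right-hand side zero, and reversing the rescaling, produces
\[W_e(y_0) \leq \frac{C}{|S|}\int_S W_e,\]
with $C$ independent of $y_0$, $e$, and $r$.

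It remains to estimate the average. Setting $\tilde u := u - \tfrac{1}{2} x^T A x$ and $V_e(y) := \tilde u(y+e) + \tilde u(y-e) - 2\tilde u(y)$, we have $W_e = e^T A e + V_e$, so it suffices to control $|S|^{-1}|\int_S V_e|$. The key observation is that
\[\int_S V_e(y)\,dy = \Psi(y_0+e) + \Psi(y_0-e) - 2\Psi(y_0), \qquad \Psi(p) := \int_{(S-y_0)+p} \tilde u(z)\,dz,\]
is the second difference of the auxiliary function $\Psi$ in direction $e$, whose distributional Hessian equals the matrix-valued Hessian measure of $u$ evaluated on the translated set minus $A|S|$. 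From Proposition \ref{asymp bound}, via the blow-down $u_R(y) := R^{-2} u(Ry) \to \tfrac{1}{2} y^T A y$ in $C^0_{\mathrm{loc}}$ together with Caffarelli's interior $C^{1,\alpha}$ compactness \cite{C2}, one deduces the qualitative gradient asymptotic $|\nabla u(x) - Ax| = o(|x|)$ as $|x| \to \infty$. A divergence-theorem calculation then yields $|D^2 \Psi(p)| = o(r^n)$ uniformly on the segment $[y_0-e, y_0+e]$ as $r \to \infty$, so that $|S|^{-1}|\int_S V_e| \leq \|e\|^2 \cdot o_r(1)$. Choosing $r$ large enough (uniformly in $y_0$, $e$) makes this error $\leq \|e\|^2$, and combined with the previous display gives $W_e(y_0) \leq C'\|e\|^2$ uniformly.

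The main obstacle, in contrast to \cite{CL2}, is the loss of pointwise Hessian control on $u$ when $f$ is merely bounded. The substitute is the qualitative gradient asymptotic $|\nabla u - Ax| = o(|x|)$, derived from the blow-down plus Caffarelli's $C^{1,\alpha}$ regularity rather than from a bound on $D^2 u$. A secondary technical point is the Alexandrov interpretation of the subsolution property of $W_e$ and of Theorem B, both handled by smooth approximation of $f$ by $f_\epsilon$ as in Section~3.
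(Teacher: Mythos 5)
Your proof is correct and reuses the same overall architecture as the paper's: the second incremental quotient $W_e$ is a nonnegative subsolution of the linearized Monge--Amp\`ere operator (justified, as in the paper, by concavity of $\det^{1/n}$, periodicity of $f$, and mollification), the domain is normalized to John position using Proposition \ref{asymp bound}, and Theorem B(1) converts an integral bound on $W_e$ into the pointwise bound. The genuine divergence from the paper is in how you obtain the integral bound. The paper invokes Lemma A.1 of \cite{CL2}, which is a direct integration-by-parts estimate: since the integral of a second incremental quotient over a set $K$ collapses to boundary terms, one gets $\bigl|\int_K \Delta^2_{\tilde e}\phi\bigr| \leq \|\phi\|_{\mathrm{Lip}}\,|\partial K|$ for the normalized solution $u_{M,\epsilon}$ (whose Lipschitz constant on interior sets is uniformly controlled), so the $L^1$ bound is essentially free and needs nothing beyond the $C^0$ asymptotics of Proposition \ref{asymp bound}. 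You instead decompose $W_e = e^T A e + V_e$, express $\int_S V_e$ as a second difference of the auxiliary function $\Psi$, and control $D^2\Psi$ via the divergence theorem and the gradient asymptotic $|\nabla u(x) - Ax| = o(|x|)$. This works and yields the same quantitative conclusion, but it imports more machinery: you derive $|\nabla u - Ax| = o(|x|)$ from Caffarelli's $C^{1,\alpha}$ interior regularity (effectively re-proving a stronger version of the paper's Lemma \ref{c1mu} inside the proof of Proposition \ref{second incre bound}), whereas the paper deliberately keeps this proposition dependent only on the $C^0$ bound. In fact the gradient asymptotic you need follows already from Proposition \ref{asymp bound} together with elementary convexity (a subgradient comparison on balls of radius $|x|/2$ gives $|\nabla u(x) - Ax| \lesssim |x|^{1-\delta/2}$), so the appeal to $C^{1,\alpha}$ compactness is avoidable. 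The paper's route via Lemma A.1 is shorter and more economical; yours has the minor advantage of making the subsolution/integral mechanism self-contained rather than quoting Lemma A.1, but at the cost of a longer asymptotic analysis. One small technical point you should be explicit about if writing this up: the section $S_{c_0 r^2}(y_0)$ is defined via the tangent plane at $y_0$, and showing $T(S)$ sits between $B_1$ and $B_n$ with $T(y_0)$ well inside already requires the gradient estimate $|\nabla u(y_0) - Ay_0| = o(|y_0|)$, so that estimate should be established before, not after, the normalization step.
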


\begin{proof}
We will follow the main steps as in \cite{CL2}, with some modifications. 

For any $M>0$, define
\begin{align*}
\Omega_M=\{x\in\mathbb{R}^n|u(x)<M\}.
\end{align*}
By John's lemma, there exists an affine transformation 
\begin{align*}
A_M=a_Mx+b_M
\end{align*}
such that
\begin{align*}
B_R\subset A_M(\Omega_M)\subset B_{nR}
\end{align*}
with $\det a_M=1$. Denote
\begin{align*}
O_M=\frac{1}{R}a_M(\Omega_M).
\end{align*}
Define
\begin{align*}
u_M(x)=\frac{1}{R^2}u(a_M^{-1}(Rx)),\quad x\in O_M.
\end{align*}

Now for $e\in E$ and $y\in \mathbb{R}^n$, let $x=\frac{1}{R}a_M(y)$. Take $M$ large so that $y\in \Omega_{\frac{M}{2}}$. It follows from Propostion \ref{asymp bound} that 
\begin{align*}
dist(x,\partial O_M)\geq \frac{1}{C_0}
\end{align*}
where $C_0$ depends only on $n, \inf f$ and $\sup f$.

Let  $\tilde{e}=\frac{1}{R}a_M(e)$, then
\begin{align*}
\Delta_e^2u(y)&=\frac{u(y+e)+u(y-e)-2u(y)}{\|e\|^2}\\
&=\frac{u(a^{-1}_M(R(x+\tilde{e})))+u(a^{-1}_M(R(x-\tilde{e})))-2u(a^{-1}_M(Rx))}{\|e\|^2}\\
&= \frac{R^2\|\tilde{e}\|^2}{\|e\|^2}\Delta^2_{\tilde{e}}u_M(x)\\
&=\frac{\|a_M(e)\|^2}{\|e\|^2}\Delta_{\tilde{e}}^2u_M(x).
\end{align*}

In the rest of the proof, we use $C$ to denote various positive constants depending only on $n, \inf f, \sup f$ and the constants $\delta$ and $C_1$ in Proposition \ref{asymp bound}.

By Proposition \ref{asymp bound}, $C^{-1}\leq \frac{M}{R^2}\leq C$, $\|a_M\|\leq C$. The proposition will follow as long as $\Delta_{\tilde{e}}^2u_M(x)\leq C$ for $dist(x,\partial O_M)\geq \frac{1}{C_0}$.

We now prove $\Delta_{\tilde{e}}^2u_M(x)\leq C$.

Note that $u_M(x)$ satisfies
\begin{align*}
\begin{cases}
\det(D^2u_M(x))=f(a_M^{-1}(Rx)),\quad in \quad O_M,\\
u_M(x)=\frac{M}{R^2},\quad on\quad \partial O_M.
\end{cases}
\end{align*}

Let $f_\epsilon$ be the mollification of $f$ given by (\ref{mollification}) and let $u_{M,\epsilon}(x)$ be the solution of the following Dirichlet problem
\begin{align*}
\begin{cases}
\det(D^2u_{M,\epsilon}(x))=f_\epsilon(a_M^{-1}(Rx)),\quad in \quad O_M,\\
u_{M,\epsilon}(x)=\frac{M}{R^2},\quad on\quad \partial O_M.
\end{cases}
\end{align*}

As in the proof of Theorem \ref{existence}, we have $u_{M,\epsilon}\rightarrow u_M$ in $C^0(\bar{O}_M)$ as $\epsilon\rightarrow0$.

By Lemma 2.2 in \cite{CL2}, $u_{M,\epsilon}$ satisfies
\begin{align}\label{subsolution}
F_{ij}(D^2u_{M,\epsilon}(x))\partial_{ij}\left(\Delta^2_{\tilde{e}}u_{M,\epsilon}(x)\right)\geq 0,\quad x\in\Omega_M,\quad dist(x,\partial O_M)\geq \frac{1}{8C_0}.
\end{align}

By Lemma A.1 in \cite{CL2}, we have
\begin{align*}
\int_{x\in O_M,dist(x,\partial O_M)\geq \frac{1}{4C_0}}\Delta^2_{\tilde{e}}u_{M,\epsilon}(x)\leq C.
\end{align*}

Together with Theorem B, we have
\begin{align*}
\Delta^2_{\tilde{e}}u_{M,\epsilon}(x)\leq C
\end{align*}
for $x\in O_M$ with $dist(x,\partial O_M)\geq \frac{1}{2C_0}$.

Let $\epsilon\rightarrow 0$, we have
\begin{align*}
\Delta^2_{\tilde{e}}u_{M}(x)\leq C
\end{align*}
for $x\in O_M$ with $dist(x,\partial O_M)\geq \frac{1}{C_0}$.

The proposition is now proved.

\end{proof}

\medskip

For $\lambda\geq 1$ and any function $v$, let
\begin{align*}
v^\lambda(x)=\frac{v(\lambda x)}{\lambda^2},\quad x\in \mathbb{R}^n.
\end{align*}

Denote 
\begin{align*}
Q(x)=\frac{1}{2}x^TAx.
\end{align*}

\begin{lemm}\label{c1mu}
There exists a constant $\mu\in (0,1)$ such that,
\begin{align*}
u^\lambda\rightarrow Q\quad in \quad C^{1,\mu}_{loc}(\mathbb{R}^n)\quad as \quad \lambda\rightarrow\infty.
\end{align*}
\end{lemm}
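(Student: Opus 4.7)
The plan is to combine the asymptotic $C^0$ estimate from Proposition \ref{asymp bound} with Caffarelli's interior $C^{1,\alpha}$ regularity for Alexandrov solutions of Monge-Amp\`ere equations. First I would record that, since $u$ solves $\det(D^2 u) = f$ in the Alexandrov sense on $\mathbb{R}^n$, the rescaled function $u^\lambda(x) = \lambda^{-2} u(\lambda x)$ solves
\[
\det(D^2 u^\lambda)(x) = f(\lambda x)
\]
in the Alexandrov sense on $\mathbb{R}^n$, and the new right-hand side $f(\lambda \cdot)$ still satisfies the uniform bound $0 < \inf_{\mathbb{R}^n} f \leq f(\lambda \cdot) \leq \sup_{\mathbb{R}^n} f < \infty$. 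Moreover, Proposition \ref{asymp bound} gives
\[
|u^\lambda(x) - Q(x)| = \lambda^{-2}|u(\lambda x) - Q(\lambda x)| \leq C_1 \lambda^{-\delta}|x|^{2-\delta}
\]
whenever $|\lambda x| \geq 1$, so $u^\lambda \to Q$ uniformly on every compact subset of $\mathbb{R}^n$ as $\lambda \to \infty$.

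Next I would upgrade this to a uniform $C^{1,\alpha}$ bound on compact sets. Since $Q$ is a uniformly convex quadratic polynomial, its sections are ellipsoids of comparable geometry; hence the $C^0$ proximity of $u^\lambda$ to $Q$ on $B_{2R}$ forces the sections of $u^\lambda$ centered at points of $B_R$ at any fixed small height to be comparably normalized once $\lambda$ is large, i.e.\ to contain and be contained in concentric balls of definite radii. Combined with the two-sided bound on $\det(D^2 u^\lambda)$, Caffarelli's interior $C^{1,\alpha}$ estimate for Alexandrov solutions (Theorem~2 in \cite{C2}, already invoked in the proof of Theorem \ref{existence}) then yields
\[
\|u^\lambda\|_{C^{1,\alpha}(B_R)} \leq C(R)
\]
for some $\alpha \in (0,1)$ independent of $\lambda$ and all sufficiently large $\lambda$.

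Finally I would close by compactness: by Arzel\`a-Ascoli, every sequence $\lambda_k \to \infty$ admits a subsequence along which $u^{\lambda_k}$ converges in $C^{1,\mu}_{loc}(\mathbb{R}^n)$ for every $\mu \in (0,\alpha)$, and the uniform $C^0$ convergence from the first step forces the limit to equal $Q$. Uniqueness of the limit promotes subsequential convergence to convergence of the whole family, yielding $u^\lambda \to Q$ in $C^{1,\mu}_{loc}(\mathbb{R}^n)$ for any $\mu \in (0,\alpha)$. The main obstacle is the uniform $C^{1,\alpha}$ estimate: one must carefully quantify how $C^0$ closeness to the uniformly convex $Q$ implies the geometric normalization of the sections of $u^\lambda$ needed to apply Caffarelli's theorem uniformly in $\lambda$, which goes through a barrier argument exploiting the strict quadratic growth of $Q$ from below.
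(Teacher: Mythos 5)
Your proposal is correct and follows essentially the same route as the paper: uniform $C^0$ convergence to $Q$ from Proposition \ref{asymp bound} gives normalized sublevel sets (the paper works directly with $\Omega_{4,\lambda}=\{u^\lambda<4\}$ sandwiched between two ellipsoids of $Q$), Caffarelli's Theorem 2 in \cite{C2} then yields a uniform interior $C^{1,\mu'}$ bound, and compactness upgrades $C^0$ convergence to $C^{1,\mu}$ convergence. The only cosmetic difference is that the paper passes from a fixed ball to all of $\mathbb{R}^n$ via the scaling identity $u^\lambda(x)=a^2u^{\lambda a}(x/a)$ rather than running the argument on each $B_R$ separately.
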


\begin{proof}
By Proposition \ref{asymp bound}, $u^\lambda\rightarrow Q$ in $C^0_{loc}(\mathbb{R}^n)$ as $\lambda\rightarrow\infty$. For $r>0$, denote $D_r=\{x\in \mathbb{R}^n|Q(x)<r^2\}$. There exists $\lambda_1>0$ such that for $\lambda\geq \lambda_1$, 
\begin{align*}
D_{\frac{3}{2}}\subset \{u^\lambda<4\}=:\Omega_{4,\lambda}\subset D_{\frac{5}{2}}
\end{align*}

We know that $\det(D^2u^\lambda)=f(\lambda x)$ in $\Omega_{4,\lambda}$ in the Alexandrov sense. By Theorem 2 in \cite{C2}, there exist $\mu^\prime\in (0,1)$ and $C\geq 1$ depending only on $n,\inf f,\sup f$ and $A$ such that
\begin{align*}
\|u^\lambda\|_{C^{1,\mu^\prime}(D_{\frac{4}{3}})}\leq C.
\end{align*}

Thus we have $u^\lambda\rightarrow Q$ in $C^{1,\mu}(D_1)$ for $0<\mu<\mu^\prime<1$. The lemma follows given the fact that $u^\lambda(x)=a^2u^{\lambda a}(\frac{x}{a})$ for all $a,\lambda>0$ and $x\in \mathbb{R}^n$.

\end{proof}

The following proposition is Proposition 2.3 in \cite{CL2}.

\begin{prop}\label{concavity}
\begin{align*}
\sup_{\mathbb{R}^n}\Delta^2_eu=\frac{e^\prime A e}{\|e\|^2},\quad \forall e\in E.
\end{align*}
\end{prop}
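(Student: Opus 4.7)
My plan is to split the equality into two inequalities. For the lower bound $\sup_{\mathbb R^n}\Delta^2_e u\geq e^TAe/\|e\|^2$, I will use the discrete Abel-type summation identity
\begin{equation*}
\Delta^2_{ke}u(y) \;=\; \frac{1}{k^2}\sum_{|j|\leq k-1}(k-|j|)\,\Delta^2_e u(y+je),
\end{equation*}
valid for every positive integer $k$ (an easy induction using the defining relation $u(y+(j+1)e)-2u(y+je)+u(y+(j-1)e)=\|e\|^2\Delta^2_e u(y+je)$ twice). Since the weights $(k-|j|)/k^2$ are nonnegative and sum to $1$, the right-hand side is a convex combination of values of $\Delta^2_e u$, so $\Delta^2_{ke}u(y)\leq\sup\Delta^2_e u$. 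By Proposition~\ref{asymp bound}, $u(\pm ke)=\tfrac12 k^2\, e^TAe+O(k^{2-\delta})$, and so $\Delta^2_{ke}u(0)\to e^TAe/\|e\|^2$ as $k\to\infty$, which yields the lower bound.

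The reverse inequality $\sup_{\mathbb R^n}\Delta^2_e u\leq e^TAe/\|e\|^2$ is the substantive half. I plan to argue by contradiction, following the scheme of Proposition~2.3 in \cite{CL2} but replacing the uniform $C^2$ estimate used there (unavailable here) by the mollified Dirichlet-problem scheme $u_{M,\epsilon}$ on normalized sub-level sets $O_M$ already introduced in the proof of Proposition~\ref{second incre bound}. Assume $\Delta^2_e u(y_0)\geq e^TAe/\|e\|^2+2\eta$ for some $y_0\in\mathbb R^n$ and $\eta>0$. Since (\ref{subsolution}) shows that $\Delta^2_{\tilde e}u_{M,\epsilon}$ is a subsolution of the linearized Monge-Amp\`ere operator at $u_{M,\epsilon}$, the nonnegative function $\sup_{O_M}\Delta^2_{\tilde e}u_{M,\epsilon}-\Delta^2_{\tilde e}u_{M,\epsilon}$ is a supersolution. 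The weak Harnack inequality (Theorem~B(2)) and the $C^\alpha$-bound of Theorem~C then force $\Delta^2_{\tilde e}u_{M,\epsilon}$ to remain within $\eta$ of its supremum on a ball of definite radius $r>0$ in the $O_M$-coordinates centered at the (rescaled) maximizer. Pulling back via the affine normalization $a_M$ and using Proposition~\ref{asymp bound} to identify the scale of $\Omega_M$ as $\sim\sqrt M$, this produces a ball of radius $\sim r\sqrt M$ around $y_0$ in the original coordinates. Letting first $\epsilon\to 0$ and then $M\to\infty$, with the $C^{1,\alpha}$ blow-down convergence $u^\lambda\to Q$ of Lemma~\ref{c1mu} controlling the limit, I obtain $\Delta^2_e u\geq e^TAe/\|e\|^2+\eta$ on $B_R(y_0)$ for any prescribed $R>0$.

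Applying the Abel identity above with $k=k_0:=\lfloor R/\|e\|\rfloor$, every translate $y_0+je$ with $|j|\leq k_0-1$ lies in $B_R(y_0)$, so the convex combination yields $\Delta^2_{k_0e}u(y_0)\geq e^TAe/\|e\|^2+\eta$. On the other hand, Proposition~\ref{asymp bound} also gives $\Delta^2_{k_0e}u(y_0)=e^TAe/\|e\|^2+O(k_0^{-\delta})$, and choosing $R$ (hence $k_0$) large enough that $O(k_0^{-\delta})<\eta$ produces the desired contradiction.

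The main obstacle is the lack of uniform $C^2$ estimates on $u$ in the bounded-data setting, as emphasized in the introduction: the approach of \cite{CL2}, which exploits Hessian bounds directly, does not carry over. The fix is to work throughout at the level of second-difference quotients $\Delta^2_e u$, invoking the Caffarelli-Guti\'errez Harnack theory for linearized Monge-Amp\`ere (Theorems~B and~C) on the smoothed Dirichlet approximations $u_{M,\epsilon}$, and to pass carefully to the double limit $\epsilon\to 0$, $M\to\infty$ with the help of Lemma~\ref{c1mu} and Proposition~\ref{asymp bound}.
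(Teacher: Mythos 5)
Your lower bound ($\alpha\geq\beta$ in the paper's notation) is correct and in fact cleaner than the paper's: the discrete Abel identity shows $\Delta^2_{ke}u(0)\leq\sup\Delta^2_eu$, and Proposition \ref{asymp bound} makes $\Delta^2_{ke}u(0)\to e^TAe/\|e\|^2$, so $\sup\Delta^2_eu\geq e^TAe/\|e\|^2$ without any appeal to the $L^1$-convergence lemma (Lemma A.2 of \cite{CL2}) or to Lemma \ref{c1mu} that the paper uses for this half.

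The upper bound, however, has a genuine gap. You invoke the weak Harnack inequality (Theorem B(2)) together with the $C^\alpha$ bound of Theorem C to upgrade ``$\alpha-\Delta^2_{\tilde e}u_{M,\epsilon}$ small at one point'' to ``$\Delta^2_{\tilde e}u_{M,\epsilon}$ within $\eta$ of $\alpha$ on a ball of definite radius.'' But Theorem C requires the function to be simultaneously a \emph{subsolution} of one linearized operator and a \emph{supersolution} of another; only then does one get the interior Harnack inequality and the $C^\alpha$ estimate. The quantity $\alpha-\Delta^2_{\tilde e}u_{M,\epsilon}$ is only known to be a supersolution (this is exactly the content of (\ref{subsolution}): $\Delta^2_{\tilde e}u_{M,\epsilon}$ is a subsolution, hence $\alpha-\Delta^2_{\tilde e}u_{M,\epsilon}$ is a nonnegative supersolution). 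There is no companion operator making it a subsolution, so Theorem C simply does not apply, and Theorem B(2) alone only controls the $L^{p_0}$-norm, not the pointwise values. An $L^{p_0}$ bound is not enough to push through your Abel step, because the Abel average is over a discrete string of lattice points $y_0+je$, which a small exceptional set of positive $L^{p_0}$-mass could still hit at many $j$'s.

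The paper sidesteps this by arguing in the opposite direction: it first uses Lemma A.2 of \cite{CL2} and Lemma \ref{c1mu} to get $\int_{B_1}\Delta^2_{\hat e}u^\lambda\to\beta|B_1|$, then Chebyshev shows that $\{\Delta^2_{\hat e}u^\lambda\leq\alpha'\}\cap B_1$ occupies a definite fraction of $B_1$, and the weak Harnack inequality applied to the supersolution $\alpha-\Delta^2_{\hat e}u^\lambda_{M,\epsilon}$ then lower bounds $\inf_{B_{3/4}}(\alpha-\Delta^2_{\hat e}u^\lambda_{M,\epsilon})$ by a positive constant. This yields $\sup_{B_{\lambda/2}}\Delta^2_eu\leq\alpha-C^{-1}$ for all large $\lambda$, directly contradicting the definition of $\alpha$. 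That scheme uses only the supersolution property and Theorem B(2), which is exactly what is available; you should replace your Theorem-C step by this measure-plus-weak-Harnack argument.
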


\begin{proof}
Denote
\begin{align*}
\alpha=\sup_{\mathbb{R}^n}\Delta^2_eu,\quad \beta=\frac{e^\prime A e}{\|e\|^2}.
\end{align*}

For $\lambda>0$, $\hat{e}=\frac{e}{\lambda}$, by strict convexity (see e.g. \cite{C1}) and Proposition \ref{second incre bound}, we have
\begin{align*}
0<\Delta^2_{\hat{e}}u^\lambda(x)=\Delta_e^2u(\lambda x)\leq \alpha<\infty,\quad x\in \mathbb{R}^n.
\end{align*}

By Lemma A.2 in \cite{CL2} and Lemma \ref{c1mu}, we have
\begin{align*}
\lim_{\lambda\rightarrow \infty}\int_{B_1}\Delta^2_{\hat{e}}u^\lambda dx=\int_{B_1}\beta dx=\beta|B_1|.
\end{align*}

Thus $\alpha\geq \beta$. Now suppose $\alpha>\beta$, let $\beta<\beta^\prime<\alpha^\prime<\alpha^{\prime\prime}<\alpha$, we have

\begin{align*}
\limsup_{\lambda\rightarrow\infty}\left(\alpha^\prime|\{\Delta^2_{\hat{e}}u^\lambda\geq \alpha^\prime\}\cap B_1|\right)\leq \lim_{\lambda\rightarrow \infty}\int_{B_1}\Delta^2_{\hat{e}}u^\lambda dx=\beta|B_1|.
\end{align*}

Thus for all large $\lambda$, we have 
\begin{align*}
\alpha^\prime |\{\Delta^2_{\hat{e}}u^\lambda\geq \alpha^\prime\}\cap B_1|\leq \beta^\prime|B_1|.
\end{align*}
i.e.
\begin{align*}
\frac{|\{\Delta^2_{\hat{e}}u^\lambda\leq \alpha^\prime\}\cap B_1|}{|B_1|}\geq \frac{\alpha^\prime-\beta^\prime}{\alpha^\prime}.
\end{align*}

For $M>0$, denote 
\begin{align*}
\Omega_{M,\lambda}=\{x\in \mathbb{R}^n|u^\lambda(x)<M\}.
\end{align*}
By Lemma \ref{c1mu}, there exist $M,\lambda_1$ such that for $\lambda>\lambda_1$, we have $B_2\subset \Omega_{M,\lambda}$.

As in the proof of Proposition \ref{second incre bound}, let $f_\epsilon$ be the mollification of $f$ given by (\ref{mollification}), let $u^\lambda_{M,\epsilon}(x)$ be the solution of the following Dirichlet problem
\begin{align*}
\begin{cases}
\det(D^2u^\lambda_{M,\epsilon}(x))=f_\epsilon(\lambda x),\quad in \quad \Omega_{M,\lambda},\\
u^\lambda_{M,\epsilon}(x)=M,\quad on\quad \partial\Omega_{M,\lambda}.
\end{cases}
\end{align*}

Then we have $u^\lambda_{M,\epsilon}\rightarrow u^\lambda$ in $C^0(\bar{\Omega}_{M,\lambda})$ as $\epsilon\rightarrow 0$, see in the proof of Theorem \ref{existence}.

For $\epsilon$ small enough, we have
\begin{align*}
\frac{|\{\Delta^2_{\hat{e}}u^\lambda_{M,\epsilon}\leq \alpha^{\prime\prime}\}\cap B_1|}{|B_1|}\geq \frac{\alpha^\prime-\beta^\prime}{\alpha^\prime}.
\end{align*}

By (\ref{subsolution}), $\Delta^2_{\hat{e}}u^\lambda_{M,\epsilon}$ is a subsolution of the linearized Monge-Amp\`ere equation at $u^\lambda_{M,\epsilon}$.

Apply Theorem B, we have, for some $p_0>0$ and $C>0$,
\begin{align*}
\|\alpha-\Delta^2_{\hat{e}}u^\lambda_{M,\epsilon}\|_{L^{p_0}(B_1\cap\{\Delta^2_{\hat{e}}u^\lambda_{M,\epsilon}\leq \alpha^{\prime\prime}\} )}\leq \|\alpha-\Delta^2_{\hat{e}}u^\lambda_{M,\epsilon}\|_{L^{p_0}(B_1)}\leq C\inf_{B_{\frac{3}{4}}}\left(\alpha-\Delta^2_{\hat{e}}u^\lambda_{M,\epsilon}\right).
\end{align*}

Consequently,
\begin{align*}
(\alpha-\alpha^{\prime\prime})|B_1\cap\{\Delta^2_{\hat{e}}u^\lambda_{M,\epsilon}\leq \alpha^{\prime\prime}\}|^{\frac{1}{p_0}}\leq C\inf_{B_{\frac{3}{4}}}\left(\alpha-\Delta^2_{\hat{e}}u^\lambda_{M,\epsilon}\right).
\end{align*}

Therefore,
\begin{align*}
\sup_{B_{\frac{3}{4}}}\Delta^2_{\hat{e}}u^\lambda_{M,\epsilon}\leq \alpha-C^{-1}
\end{align*}
for all $\lambda>\lambda_1$. 

Let $\epsilon\rightarrow0$, then
\begin{align*}
\sup_{B_{\frac{\lambda}{2}}}\Delta^2_{e}u=\sup_{B_{\frac{1}{2}}}\Delta^2_{\hat{e}}u^\lambda\leq \alpha-C^{-1}
\end{align*}
for all $\lambda>\lambda_1$. 

This contradicts the definition of $\alpha$.

Thus we have
\begin{align*}
\sup_{\mathbb{R}^n}\Delta^2_eu= \frac{e^\prime A e}{\|e\|^2}.
\end{align*}

\end{proof}

\medskip

To proceed, we choose $b\in \mathbb{R}^n$ such that
\begin{align*}
w(e_k)=w(-e_k),\quad 1\leq k\leq n,
\end{align*}
where 
\begin{align*}
w(x):=u(x)-\frac{1}{2}x^T Ax-b\cdot x.
\end{align*}

By Theorem \ref{existence}, there exists $v\in C^{0,1}(\mathbb{R}^n)$ which is $1$-periodic satisfying $\det(A+D^2v)=f$ in the Alexandrov sense. Choose $v$ such that $v(0)=w(0)$.

Define
\begin{align}\label{def h}
h=w-v.
\end{align}

Then we have $h(0)=0$.
\medskip

We now prove that $h$ is bounded from above. 

\begin{lemm}\label{h bound}
\begin{align*}
\sup_{\mathbb{R}^n}h<\infty.
\end{align*}
\end{lemm}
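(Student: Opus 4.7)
The strategy combines an integer-direction midpoint concavity of $h$, the coordinate symmetry built into the choice of $b$, and the sublinear-in-quadratic growth $|h(x)| \leq C(1+|x|^{2-\delta})$ coming from Proposition \ref{asymp bound}, with a blow-up/contradiction argument in the spirit of the proofs of Propositions \ref{second incre bound} and \ref{concavity}.

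First, since $v$ is $1$-periodic we have $\Delta_e^2 v \equiv 0$ for every $e \in E$, so Proposition \ref{concavity} yields
\[
\Delta_e^2 h(x) \;=\; \Delta_e^2 u(x) - \frac{e^T A e}{\|e\|^2} \;\leq\; 0 \qquad \text{for all } x \in \mathbb{R}^n,\; e \in E.
\]
The choice of $b$ gives $w(e_k) = w(-e_k)$, and combined with $v(\pm e_k) = v(0) = w(0)$ this forces $h(e_k) = h(-e_k)$. Then $\Delta_{e_k}^2 h(0) \leq 0$ and $h(0) = 0$ yield $h(\pm e_k) \leq 0$; iterating the discrete concavity of $m \mapsto h(m e_k)$ on $\mathbb{Z}$ gives $h(m e_k) \leq 0$ for all $m \in \mathbb{Z}$ and $1 \leq k \leq n$.

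To pass from this coordinate-axis bound to a global upper bound I would argue by contradiction. If $\sup_{\mathbb{R}^n} h = +\infty$, choose $x_j$ with $M_j := h(x_j) \to \infty$; the growth bound forces $\lambda_j := |x_j| \to \infty$ with $M_j \leq C \lambda_j^{2-\delta}$. Consider the rescalings
\[
h_j(y) \;:=\; h(x_j + \lambda_j y)/M_j,
\]
which satisfy $h_j(0) = 1$, $h_j(-x_j/\lambda_j) = 0$, and are locally uniformly bounded. Equi-continuity is obtained as in the proofs of Propositions \ref{second incre bound} and \ref{concavity}, by mollifying $f$ to $f_\epsilon$, applying Theorem B to the linearized Monge-Amp\`ere operator along a homotopy between $u_\epsilon$ and $\tfrac12 x^T A x + b\cdot x + v_\epsilon$, and letting $\epsilon \to 0$. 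A subsequential limit $h_\infty$ then inherits $h_\infty(0) = 1$, and after passing to a further subsequence with $x_j/\lambda_j \to \omega$, $|\omega|=1$, one has $h_\infty(-\omega) = 0$. The rescaled integer concavity $\Delta_{e/\lambda_j}^2 h_j(y) \leq 0$ for $e \in E$ becomes dense in direction and step size as $\lambda_j \to \infty$, so by continuity $h_\infty(y+\epsilon v) + h_\infty(y-\epsilon v) \leq 2 h_\infty(y)$ for every $\epsilon>0$ and $v$, i.e., $h_\infty$ is concave on $\mathbb{R}^n$. Moreover the axis bound $h(me_k)\leq 0$ rescales to $h_\infty(-\omega + t e_k) \leq 0$ for every $t \in \mathbb{R}$ and $1 \leq k \leq n$. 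A concave function satisfying these constraints must contradict $h_\infty(0) = 1$, completing the argument.

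\textbf{Main obstacle.} The principal difficulty is twofold. First, establishing compactness of $\{h_j\}$ rigorously requires the smooth-approximation framework already developed in the earlier propositions (since $u$ and $v$ are a priori only $C^{0,1}$ and $C^{1,\alpha}$, respectively), together with the weak Harnack inequality of Theorem B applied to the linearized Monge-Amp\`ere equation. Second, extracting a concrete contradiction from the structural data of $h_\infty$ — concave on $\mathbb{R}^n$, equal to $1$ at the origin, vanishing at $-\omega$, and nonpositive on the $n$ coordinate lines through $-\omega$ — is delicate: integer-direction midpoint concavity at finite scale is strictly weaker than full concavity, and the $n$ coordinate lines through $-\omega$ span the ambient space only as an affine hull, not as a convex cone, so the final step must carefully combine the concavity of $h_\infty$ in the $\omega$-direction (coming from $h_\infty(-\omega)=0$ and $h_\infty(0)=1$) with the axis bounds to obtain the contradiction.
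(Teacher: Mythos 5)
Your proposal has the right ingredients in spirit (concavity from Proposition \ref{concavity}, mollification plus Theorems B and C for compactness, a blow-up argument) but the blow-up you set up is not the one in the paper and, more importantly, it does not actually close. Two gaps are fatal.

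\textbf{The final contradiction does not hold.} You arrive at an $h_\infty$ which is concave, satisfies $h_\infty(0)=1$, $h_\infty(-\omega)=0$, and is nonpositive on each line $\{-\omega + t e_k : t\in\mathbb{R}\}$. These constraints are consistent: take $n=2$, $\omega=(\omega_1,\omega_2)$ with $\omega_1,\omega_2>0$, and set
\begin{align*}
h_\infty(x_1,x_2)=\frac{1}{\min(\omega_1,\omega_2)}\,\min\bigl(x_1+\omega_1,\ x_2+\omega_2\bigr).
\end{align*}
This function is concave (minimum of affine functions), equals $1$ at the origin, equals $0$ at $-\omega$, and is $\le 0$ on both coordinate lines through $-\omega$. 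So unless $\omega$ happens to be a coordinate direction, the structural data you extract for $h_\infty$ admits a smooth-free counterexample and cannot yield a contradiction. What is missing is the step where the Harnack inequality of Theorem C is applied not merely for equicontinuity but to the \emph{nonnegative} quantity $l - H_i$ (with $l$ a supporting linear function of the concave limit at $0$) to force $\max(l-H)\le C\,(l(0)-H(0))=0$, i.e.\ to upgrade concavity of the blow-up limit to \emph{linearity} on a sub-cube; only then do the axis bounds at $\pm\frac12 e_k$ kill the linear function and yield the contradiction. The paper does exactly this.

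\textbf{The rescaled family is not known to be locally bounded.} You define $h_j(y)=h(x_j+\lambda_j y)/M_j$ with $M_j=h(x_j)$ and $\lambda_j=|x_j|$. For local boundedness of $h_j$ on a fixed ball you need $\sup_{|y|\le R}|h(x_j+\lambda_j y)|\le C(R)\,M_j$, but a priori $M_j$ is only a single value of $h$ and may be much smaller than $\sup_{[-\lambda_j,\lambda_j]^n}h$. The growth bound $|h|\le C|x|^{2-\delta}$ only gives the upper bound $M_j\le C\lambda_j^{2-\delta}$; it does not give the lower comparability needed. The paper handles this by centering at the origin, normalizing by $M_{2^i}:=\sup_{[-2^i,2^i]^n}h$, and first proving a doubling estimate $M_{2^i}\le 4M_{2^{i-1}}+C$ (using midpoint concavity along integer directions together with local Lipschitz bounds on $[-1,1]^n$). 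That doubling estimate controls both the equiboundedness of the rescalings and, crucially, the lower bound $\max_{[-1/2,1/2]^n}H_i\ge 1/8$ that makes the eventual $H\equiv 0$ a contradiction. Your argument has no analogue of either of these quantitative inputs.

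In short: the paper blows up at the origin with a dyadic normalization, proves a doubling estimate to control the scales, and uses Theorem C twice --- once for compactness and once to prove the limit is linear. Your proposal omits the doubling estimate (so compactness is not justified) and omits the linearization step (so the final step has an explicit counterexample).
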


\begin{proof}
We follow the proof of Lemma 2.9 in \cite{CL2}. On the other hand, since uniform $C^2$ estimates are not available for $f\in L^\infty$, we need to provide new arguments in several places.

Let 
\begin{align*}
{M}_i=\sup_{x\in [-i,i]^n}h(x),\quad i=1,2,\cdots
\end{align*}
Suppose $h$ is not bounded above, then we have
\begin{align*}
\lim_{i\rightarrow \infty}{M}_i=\infty.
\end{align*}
We claim that for some cosntant $C$ independent of $i$, we have
\begin{align}\label{compare m}
{M}_{2^i}\leq 4{M}_{2^{i-1}}+C,\quad \forall i=1,2,\cdots
\end{align}

First of all, since both $w$ and $v$ are locally Lipschitz and $h(0)=0$, we have
\begin{align*}
|h(x)|\leq C,\quad \forall x\in [-1,1]^n.
\end{align*}

Now for $x=(x_1,\cdots,x_n)\in [-m,m]^n$ where $m$ is an integer, let $[x_k]$ be the integer part of $x_k$. Define
\begin{align*}
\epsilon_k=\begin{cases}
1,\quad \textit{if } [x_k] \textit{ is odd},\\
0,\quad \textit{if } [x_k] \textit{ is even}.
\end{cases}
\end{align*}

Then by Proposition \ref{concavity}, we have
\begin{align}\label{Delta h}
\Delta_e^2h=\Delta_e^2w\leq 0,\quad in \quad \mathbb{R}^n,\quad e\in E.
\end{align}

Thus
\begin{align*}
h(x)+h(x-\sum_{k=1}^n ([x_k]+\epsilon_k)e_k)\leq 2h(x-\sum_{k=1}^n \frac{[x_k]+\epsilon_k}{2}e_k).
\end{align*}

Since
\begin{align*}
x-\sum_{k=1}^n ([x_k]+\epsilon_k)e_k\in [-1,1]^n,\quad x-\sum_{k=1}^n \frac{[x_k]+\epsilon_k}{2}e_k\in [-[\frac{m+1}{2}]-1,[\frac{m+1}{2}]+1]^n,
\end{align*}

we have
\begin{align*}
h(x)\leq 2{M}_{[\frac{m+1}{2}]+1}+C.
\end{align*}

It follows that
\begin{align*}
{M}_m\leq 2{M}_{[\frac{m+1}{2}]+1}+C.
\end{align*}

Taking $m=2^i$, we have proved (\ref{compare m}).

Let
\begin{align*}
{H}_i(x)=\frac{h(2^ix)}{{M}_{2^i}},\quad x\in [-1,1]^n.
\end{align*}

By Lemma A.3 in \cite{CL2}, (\ref{Delta h}) and the fact that $h(0)=0$, $h(e_k)=h(-e_k)$, we have
\begin{align}\label{1/2}
{H}_i(\pm\frac{1}{2}e_k)=\frac{h(\pm2^{i-1}e_k)}{{M}_{2^i}}\leq 0,\quad 1\leq k\leq n,\quad i=1,2,\cdots
\end{align}

By (\ref{compare m}), we have
\begin{align}\label{1/8}
\max_{[-\frac{1}{2},\frac{1}{2}]^n}{H}_i=\frac{{M}_{2^{i-1}}}{{M}_{2^i}}\geq \frac{{M}_{2^i}-C}{4{M}_{2^i}}\geq \frac{1}{8}
\end{align}
for large $i$.

By the definition of $H_i$,
\begin{align*}
{H}_i\leq 1\quad on \quad [-1,1]^n,
\end{align*}
and
\begin{align*}
{H}_i(0)=\frac{h(0)}{{M}_{2^i}}=0.
\end{align*}

\medskip

{\it Claim}: Let $0<b^\prime<b\leq 1$, if $l(x)-H_i\geq 0$ in $[-b,b]^n$ for a linear function $l(x)$ , then for some positive constants $\alpha$ and $C$ independent of $i$ and $l(x)$, we have
\begin{align*}
\max_{[-b^\prime,b^\prime]^n}(l-H_i)\leq C\min_{[-b^\prime,b^\prime]^n}(l-H_i),
\end{align*} 
and
\begin{align*}
\|H_i\|_{C^\alpha([-b^\prime,b^\prime]^n)}\leq C.
\end{align*}

\medskip

We now prove the claim.

Recall that $v$ in (\ref{def h}) is the unique solution of $\det(A+D^2v)=f$ in $\mathbb{T}^n$ satisfying $v(0)=w(0)$.

As in the proof of Theorem \ref{existence}, deonte
\begin{align*}
\tilde{f}_\epsilon=f_\epsilon-\strokedint_{\mathbb{T}^n}f_\epsilon+\det A.
\end{align*} 

Let $\tilde{v}_\epsilon$ be the unique function with $(A+D^2\tilde{v}_\epsilon)>0$ satisfying
\begin{align*}
\begin{cases}
\det(A + D^2\tilde{v}_\epsilon)=\tilde{f}_\epsilon,\quad in\quad \mathbb{T}^n,\\
\tilde{v}_\epsilon(0)=v(0)=w(0).
\end{cases}
\end{align*}
Since $|\nabla v_\epsilon|\leq C(A)$ and $\tilde{f}_\epsilon\rightarrow f$  in $C^0(\mathbb{T}^n)$ as $\epsilon\rightarrow 0$, by the uniqueness of solution of $\det(A+D^2v)=f$ on $\mathbb{T}^n$ in the Alexandrov sense, we have $\tilde{v}_\epsilon\rightarrow v$ in $C^0(\mathbb{T}^n)$ as $\epsilon\rightarrow 0$.

For $i$ fixed, denote
\begin{align*}
\Omega_i=\{x\in \mathbb{R}^n|u(x)<C2^{2i-1}\}.
\end{align*}
where $C$ is a fixed constant greater than the largest eigenvalue of $A$. By Proposition \ref{asymp bound}, we have $[-2^i,2^i]^n \subset \Omega_i \subset [-C2^i,C2^i]^n$, where $C$ is another constant depending only on $A$.

Let $\tilde{u}_\epsilon$ be the solution of the following Dirichlet problem
\begin{align*}
\begin{cases}
\det(D^2\tilde{u}_\epsilon(x))=\tilde{f}_\epsilon(x),\quad in \quad \Omega_i,\\
\tilde{u}_\epsilon(x)=M,\quad on\quad \partial\Omega_i.
\end{cases}
\end{align*}
As before, $\tilde{f}_\epsilon\rightarrow f$  in $C^0(\mathbb{T}^n)$ as $\epsilon\rightarrow 0$, and we have $\tilde{u}_\epsilon\rightarrow u$ in $C^0(\bar{\Omega}_i)$ as $\epsilon\rightarrow 0$.

Denote
\begin{align*}
\tilde{h}_\epsilon(x)=\tilde{u}_\epsilon(x)-\frac{1}{2}x^T Ax-bx-\tilde{v}_\epsilon(x).
\end{align*}
It follows that $\tilde{h}_\epsilon\rightarrow h$ in $C^0(\bar{\Omega}_i)$ as $\epsilon\rightarrow 0$.

Recall that
\begin{align*}
F(A + D^2\tilde{v}_\epsilon)\leq F(D^2\tilde{u}_\epsilon)+F_{ij}(D^2\tilde{u}_\epsilon)(A+\partial_{ij}\tilde{v}_\epsilon-\partial_{ij}\tilde{u}_\epsilon),
\end{align*}
i.e.
\begin{align}\label{supersol}
F_{ij}(D^2\tilde{u}_\epsilon)\partial_{ij} \tilde{h}_\epsilon\leq 0.
\end{align}
Similarly,
\begin{align*}
F(D^2\tilde{u}_\epsilon)\leq F(A + D^2\tilde{v}_\epsilon)+F_{ij}(A + D^2\tilde{v}_\epsilon)(\partial_{ij}\tilde{u}_\epsilon-A-\partial_{ij}\tilde{v}_\epsilon),
\end{align*}
i.e.
\begin{align}\label{subsol}
F_{ij}(A+D^2\tilde{v}_\epsilon)\partial_{ij} \tilde{h}_\epsilon\geq 0.
\end{align}

Define
\begin{align*}
\tilde{H}_{\epsilon i}(x)=\frac{\tilde{h}_\epsilon(2^ix)}{M_{2^i}},\quad x\in [-1,1]^n.
\end{align*}

Then $\tilde{H}_{\epsilon i}\rightarrow H_i$ in $C^0([-1,1]^n)$ as $\epsilon\rightarrow 0$.

For any $\delta>0$, we have $l+\delta-\tilde{H}_{\epsilon i}$ is nonnegative in $[-b,b]^n$ for all $\epsilon$ small enough. 

By (\ref{supersol}) and (\ref{subsol}) , we have
\begin{align*}
&F_{ij}(A+D^2\tilde{v}_\epsilon)\partial_{ij}\left( l+\delta-\tilde{H}_{\epsilon i}\right)\geq 0,\quad in \quad\frac{1}{2^i}\Omega_i,\\
&F_{ij}(D^2\tilde{u}_\epsilon)\partial_{ij}\left( l+\delta-\tilde{H}_{\epsilon i}\right)\leq 0,\quad in \quad\frac{1}{2^i}\Omega_i.
\end{align*}
By our choice of $\Omega_i$, we have $[-1,1]^n\subset \frac{1}{2^i}\Omega_i\subset [-C,C]^n$.

By Theorem C, we have
\begin{align*}
\max_{[-b^\prime,b^\prime]^n}(l+\delta-\tilde{H}_{\epsilon i})\leq C(l+\delta-\tilde{H}_{\epsilon i}(0))\leq 2C,
\end{align*}
\begin{align*}
\|l+\delta-\tilde{H}_{\epsilon i}\|_{C^\alpha([-b^\prime,b^\prime]^n)}\leq C,
\end{align*}
where $\alpha, C$ only depends on $n,\lambda, \Lambda$ and $A$, in particular, $\alpha, C$ does not depend on $\epsilon$ and $i$.

The claim is now proved after sending $\epsilon$ to $0$.

\medskip

It follows that there exist some $0<\alpha^\prime<\alpha<1$ and $H$ such that
\begin{align*}
{H}_i\rightarrow {H}\quad in \quad C^{\alpha^\prime} ([-\frac{3}{4},\frac{3}{4}]^n)\textit{ along a subsequence }i\rightarrow\infty.
\end{align*}

By (\ref{1/8}), we have
\begin{align}\label{1/8-2}
\max_{[-\frac{1}{2},\frac{1}{2}]^n}{H}\geq \frac{1}{8}.
\end{align}
By (\ref{1/2}), we have
\begin{align}\label{1/2-2}
{H}(\pm\frac{1}{2}e_k)\leq 0,\quad 1\leq k\leq n.
\end{align}

We also know that
\begin{align*}
{H}(0)=\lim_{i\rightarrow \infty}{H}_i(0)=0.
\end{align*}

By (\ref{Delta h}), 
\begin{align*}
\Delta^2_{2^{-i}e}{H}_i=\frac{\Delta^2_eh}{M_{2_i}}\leq 0,\quad\forall e\in E.
\end{align*}

It follows that ${H}$ is concave. We can then find a linear function $l$ such that $l-{H}\geq 0$ in $[-\frac{3}{4},\frac{3}{4}]^n$ with $l(0)=0$. By the convergence of $H_i$ to $H$, there exist constants $\delta_i\rightarrow 0$ such that $l_i(x)=l(x)+\delta_i$ satisfies $l_i-H_i\geq 0$ in $[-\frac{3}{4},\frac{3}{4}]^n$. Applying the earlier claim to $l_i-H_i$ with $b=\frac{3}{4}$ and $b^\prime=\frac{1}{2}$, and then sending $i$ to $\infty$, we conclude that $\max_{[-\frac{1}{2},\frac{1}{2}]^n} (l-{H})\leq C(l(0)-H(0))=0$. Thus
\begin{align*}
{H}=\sum_{k=1}^n c_kx_k,\quad on \quad [-\frac{1}{2},\frac{1}{2}]^n.
\end{align*}
Now by (\ref{1/2-2}), we conclude that $c_k=0$, i.e. ${H}\equiv 0$. However, this violates (\ref{1/8-2}). The lemma is now proved.
\end{proof}

\medskip

Proof of the Theorem \ref{main theorem}.
\begin{proof}
By Lemma \ref{h bound}, there exists some constant $a$ such that 
\begin{align*}
\inf_{\mathbb{R}^n}(a-h)=0.
\end{align*}
Since $\frac a{ M_{2^i} } -H_i=\frac{a-h(2^ix)}{M_{2^i}}\geq 0$, by the earlier claim, there exists some constant $C$ such that
\begin{align*}
\max_{ [-\frac{1}{2}, \frac{1}{2}]^n }\left( \frac a{ M_{2^i} } -H_i\right)\leq C\min_{  [-\frac{1}{2}, \frac{1}{2}]^n }\left( \frac a{ M_{2^i} } -H_i\right)
\end{align*}
for all large $i$.

Namely,
\begin{align*}
\max_{ [-2^{i-1}, 2^{i-1}]^n }(a-h)\leq C\min_{ [-2^{i-1}, 2^{i-1}]^n }(a-h)
\end{align*}
for all large $i$.

It follows that
\begin{align*}
\sup_{ R^n} (a-h)\le C \inf_{R^n}(a-h)=0.
\end{align*}
Thus $h\equiv a$, i.e. $u\equiv \frac{1}{2}x^TAx+b\cdot x+a+v$.

\end{proof}

\end{document}